\documentclass[a4paper,12pt]{article}
\usepackage{amsfonts,amssymb,amsthm,amsmath,latexsym,bm,enumerate}

\usepackage{subfigure,epsfig,color,caption, authblk, graphicx, enumitem, mathrsfs, indentfirst}

\usepackage{hyperref, cleveref}
\hypersetup{colorlinks=true, linkcolor=blue, filecolor=magenta, urlcolor=cyan}
\pagestyle{plain}
\numberwithin{equation}{section}

\newcommand{\keywords}[1]{\small\textbf{\textit{Keywords---}}#1}

\textwidth=15.7cm
\textheight=22.5cm
\parskip=3pt
\parindent=8mm
\oddsidemargin=2mm
\evensidemargin=0mm
\topmargin=-0.5cm
\marginparwidth=1cm

\newtheorem{theorem}{Theorem}
\newtheorem{lemma}{Lemma}[section]

\newtheorem{corollary}{Corollary}[section]
\newtheorem{example}{Example}[section]

\title{On the convergence of PINNs for inverse source problem in the complex Ginzburg-Landau equation}

\author[1]{Xing Cheng}
\author[2]{Zhiyuan Li\footnote{Corresponding to: lizhiyuan@nbu.edu.cn}}
\author[3]{Mengmeng Zhang}
\author[4]{Xuezhao Zhang}

\affil[1]{School of Mathematics, Hohai University, Nanjing 210098, China}
\affil[2,4]{School of Mathematics and Statistics, Ningbo University, Ningbo 315211, China}
\affil[3]{School of Science, Hebei University of Technology, Tianjin 300401, China}

\begin{document}
\maketitle

\abstract{ This paper addresses the problem of recovering the spatial profile of the source in the complex Ginzburg-Landau equation from regional observation data at fixed times. We establish  two types of sufficient measurements for the unique solvability of the inverse problem. The first is to determine the source term by using whole data at one fixed instant. Conditional stability is established by using the eigenfunction expansion argument. Next, using the analytic continuation method, both uniqueness and a stability estimate for recovering the unknown source can be established from local data at two instants. Finally, algorithms based on the physics-informed neural networks (PINNs) are proposed, and several numerical experiments are presented to show the accuracy and efficiency of the algorithm.}

\keywords{Complex Ginzburg-Landau equation, inverse source problem, local measurement, conditional stability, physics-informed neural networks}

\textbf{MSC2020:} 35Q56, 35R30, 68T07

\section{Introduction and main results}

The complex Ginzburg-Landau equation (CGLE), originally proposed by Ginzburg and Landau in \cite{ginzburg1950}, is one of the fundamental theoretical models used to capture the physical processes of superconducting states initially. 
With the development of theoretical research on CGLE, it is now used much more widely. Now it plays a central role in various scientific fields, including phase transitions in non-equilibrium systems, instabilities in fluid dynamics, chemical turbulence and reaction-diffusion systems, and nonlinear optics \cite{hohenberg2015, rosenstein2010}. In particular, the CGLE has emerged as a canonical model for dissipative nonlinear systems, providing crucial insights into complex phenomena such as non-equilibrium dynamics, self-organization, and spatiotemporal chaos \cite{aranson2002,  cheng2023, Liu2017, garciam2012}.

In recent years, the CGLE has attracted great attention from mathematicians. Substantial progress has been made, including well-posedness, uniqueness, and regularity of solutions. 
For instance, 
Shimotsuma, Yokota, and  Yoshii \cite{shimotsuma2016} provided decay estimates and analyzed the local and global solvability of the CGLE under various spatial and parameter conditions. Correa and \"Ozsari \cite{correa2018} formulated a well-posedness theory with dynamic boundary conditions, encompassing both strong and weak solutions. For the CGLE with Robin-type boundary condition, we refer to Kaikina \cite{kaikina2019} in which existence and regularity of the solution was further addressed. Recently, some mathematicians have also begun to study stochastic CGLE, see e.g., Balanzario and Kaikina \cite{balanzario2020} investigated a stochastic nonlinear CGLE on a half-line with white noise boundary conditions, proving local well-posedness and analyzing regularity near the boundary. We also refer to the recent progress made by Cheng, Guo and Zheng \cite{CGZ0,cheng2023} on the CGLE in the whole space.

The inverse problems associated with the complex Ginzburg–Landau equation have also been widely studied, although we do not aim to give a complete bibliography here. Dou, Fu, Liao, and Zhu \cite{dou2023} derived global Carleman estimates for the CGLE with cubic nonlinearity, and applied them to state observation problems. Fan and Jiang \cite{fan2005} proved existence and uniqueness results for an inverse problem in a time-dependent CGLE model with integral type overdetermination. In a further study, Fan, Jiang, and Nakamura \cite{fan2010} examined existence, uniqueness, and stability for the inverse problem with final time data. 
Rosier and Zhang \cite{rosier2008} analyzed internal and boundary control problems by combining Carleman estimates and sectorial operator theory, obtaining null controllability results. Significant progress has also been made in the numerical analysis of inverse source problems related to the CGLE. Trong, Duy, and  Minh \cite{trong2016} proposed a function approximation and regularization approach to solve the ill-posed terminal value problem with locally Lipschitz nonlinearities, deriving error bounds under noisy data. Kirane, Nane, and Tuan \cite{kirane2017} introduced a novel regularization technique incorporating statistical methods and proved convergence rates in both $L^2$ and $H^1$ norms. 

As a limiting case of the CGLE, their analytical frameworks and inverse problem methodologies share intrinsic connections. Notably, techniques such as Carleman estimates \cite{Yamamoto2009,Bellassoued2010,Wu2020,Chen2022,Zheng2014} can be adaptively modified for CGLE inverse problems by incorporating complex-valued weight functions [Dou Fangfang]. Similarly, spectral methods based on eigenfunction expansions \cite{Wang2023,Liu2013,Chen2020,Le2013,Wen2025} are extendable to CGLE systems. Following the theoretical approach of heat equations, we address the following inverse problems for CGLE.

In this paper, setting $a,b\in \mathbb{R}$ with $a> 0,b\neq0$, we consider a typical inverse problem which is to identify the source $f(x)$ in the following initial-boundary value problem for complex Ginzburg-Landau equation
\begin{equation}\label{eq-gov}
\begin{cases}
u_{t}-(a+ib)\Delta u = g(t) f(x), & (x,t) \in \Omega_T:=\Omega\times(0,T], \\
u(x, t)=0, & (x,t) \in \partial \Omega\times(0,T], \\
u(x, 0)=u_0(x), & x \in \Omega
\end{cases}
\end{equation}
from the measurement
\begin{equation}\label{eq-ob}
u(x, T_j) = h_j(x), \quad x \in \Omega_0 \subset \Omega\subset \mathbb R^d,
\end{equation}
where $u$ is a complex-valued function of $(x,t)\in\Omega\times(0,T]$, $\Omega_0$ be an arbitrary subdomain of $\Omega$, $0<T_1\le T_2 \le T$, $g(t):=e^{-\gamma t}$ with $\gamma\in \mathbb C$ and $h_j(x)$, $j=1,2$ are given. 
Here $\Omega\subset\mathbb R^d$, $d\ge1$, is a bounded domain with $\partial\Omega$ being an analytic hyper curve. 

This inverse problem holds significant value in both theoretical research and practical applications. For example, at the final time, the effects of external disturbances (e.g., thermal noise, magnetic field fluctuations) have been dissipated by the system \cite{alberto2013super}. Consequently, observational data at this terminal stage better reflect the intrinsic role of the source term, rather than transient disturbances. Under suitable assumptions on $f$ and $u$, we can obtain conditional stability in the case when $\Omega_0 = \Omega$ and $T_1=T_2=T$.

Before giving the main results, we recall that the operator $-\Delta$ is symmetric uniformly elliptic with domain $\mathcal D(-\Delta)=H^2(\Omega) \cap H_0^1(\Omega)$. Let $\{\lambda_n,\varphi_n\}_{n=1}^\infty$ denote the Dirichlet eigensystem of $-\Delta$, where $0<\lambda_1 \le \lambda_2\le \cdots $ and the corresponding eigenfunctions $\{\varphi_n\}_{n=1}^\infty$ form an orthonormal basis of $L^2(\Omega)$. We are now ready to state the first main theorem.
\begin{theorem}\label{thm-stabi-global}
Suppose $u_0=0$, $g(t)=e^{-\gamma t}$ with $\gamma\not\in  \cup_{{k\in\mathbb Z},{n \in \mathbb N\setminus\{0\}}} \left\{a\lambda_n + i \left(b\lambda_n - \frac{2k\pi}T \right) \right\}$, and $(u,f)\in \left(H^1\left(0,T;L^2(\Omega) \right) \cap L^2 \left(0,T; H^2(\Omega) \right)
\right) \times  \left(H^2(\Omega)\cap H_0^1(\Omega) \right)$ is a pair of solutions of \eqref{eq-gov} that corresponds to the measurement data $u(\cdot,T)$ in $\Omega \subset \mathbb R^d$. If $\|f\|_{H^2(\Omega) \cap H_0^1(\Omega)} \le M$ for some positive constant $M$, then there exists a constant $C=C(a,b,d,\gamma,T,M,\Omega)$ such that
$$
\|f\|_{L^2(\Omega)} \le CM^{\frac{1}{2}}\|u(\cdot,T)\|_{L^2(\Omega)}^{\frac{1}{2}}.
$$
\end{theorem}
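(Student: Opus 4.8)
The plan is to diagonalize the problem in the Dirichlet eigenbasis and thereby reduce the stability estimate to a weighted $\ell^2$ inequality. Writing $f_n=(f,\varphi_n)_{L^2(\Omega)}$ and $u_n(t)=(u(\cdot,t),\varphi_n)_{L^2(\Omega)}$, and using the regularity $u\in H^1(0,T;L^2(\Omega))\cap L^2(0,T;H^2(\Omega))$ together with $f\in H^2(\Omega)\cap H_0^1(\Omega)$, I would project \eqref{eq-gov} onto $\varphi_n$ to obtain the scalar initial value problem $u_n'(t)+(a+ib)\lambda_n u_n(t)=e^{-\gamma t}f_n$ with $u_n(0)=0$. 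Solving this linear ODE by variation of parameters and evaluating at $t=T$ gives $u_n(T)=K_n f_n$ with the explicit multiplier
\[
K_n=\frac{e^{-\gamma T}-e^{-(a+ib)\lambda_n T}}{(a+ib)\lambda_n-\gamma},
\]
whose denominator never vanishes because the excluded set with $k=0$ already forces $(a+ib)\lambda_n\neq\gamma$. Thus $f_n=u_n(T)/K_n$ once $K_n\neq0$, and the whole estimate reduces to controlling $\sum_n|u_n(T)|^2/|K_n|^2$ in terms of $\|u(\cdot,T)\|_{L^2(\Omega)}$ and $M$.

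The second step is to exploit the hypothesis on $\gamma$ quantitatively. A direct computation shows $K_n=0$ if and only if $e^{((a+ib)\lambda_n-\gamma)T}=1$, that is $\gamma=a\lambda_n+i(b\lambda_n-2k\pi/T)$ for some $k\in\mathbb Z$; the assumed exclusion of this set guarantees $K_n\neq0$ for every $n$, which already yields uniqueness. For the stability constant I need a lower bound on $|K_n|$. Since $a>0$, the term $e^{-(a+ib)\lambda_n T}$ decays like $e^{-a\lambda_n T}\to0$, so that $\lambda_n K_n\to e^{-\gamma T}/(a+ib)\neq0$ as $n\to\infty$. Because every $K_n$ is nonzero and $\lambda_n|K_n|$ has a strictly positive limit, the infimum $c:=\inf_n\lambda_n|K_n|$ is strictly positive, giving the uniform bound $|K_n|^{-2}\le\lambda_n^2/c^2$.

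With this in hand I would carry out a frequency splitting tuned to produce the square-root rate. Using the norm equivalence $\|f\|_{H^2(\Omega)\cap H_0^1(\Omega)}\sim\|\Delta f\|_{L^2(\Omega)}=\big(\sum_n\lambda_n^2|f_n|^2\big)^{1/2}$ (elliptic regularity on the analytic domain $\Omega$), the a priori bound gives $\sum_n\lambda_n^2|f_n|^2\le CM^2$. For any threshold $\rho>0$ I split
\[
\|f\|_{L^2(\Omega)}^2=\sum_{\lambda_n\le\rho}|f_n|^2+\sum_{\lambda_n>\rho}|f_n|^2\le\frac{\rho^2}{c^2}\sum_{\lambda_n\le\rho}|u_n(T)|^2+\frac{1}{\rho^2}\sum_{\lambda_n>\rho}\lambda_n^2|f_n|^2,
\]
so that $\|f\|_{L^2(\Omega)}^2\le c^{-2}\rho^2\|u(\cdot,T)\|_{L^2(\Omega)}^2+CM^2\rho^{-2}$. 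Minimizing the right-hand side over the continuous parameter $\rho>0$ (the two terms balance at $\rho^2\sim M/\|u(\cdot,T)\|_{L^2(\Omega)}$; the case $u(\cdot,T)=0$, which forces $f=0$, is trivial) produces $\|f\|_{L^2(\Omega)}^2\le C'M\,\|u(\cdot,T)\|_{L^2(\Omega)}$, which is exactly the claimed inequality after taking square roots.

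The main obstacle is the second step: turning the pointwise nonvanishing $K_n\neq0$ into a uniform bound $\inf_n\lambda_n|K_n|>0$ that decays no faster than $\lambda_n^{-1}$. The nonvanishing of each $K_n$ is immediate from the hypothesis, but the uniformity is what makes the constant $C$ genuinely depend on $a,b,\gamma,T$ and, through possible near-resonances, encodes how close $\gamma$ lies to the excluded set. Everything else — the ODE representation, the norm equivalence, and the interpolation between the measurement and the a priori bound — is routine.
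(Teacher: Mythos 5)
Your proposal is correct and follows essentially the same route as the paper: diagonalization in the Dirichlet eigenbasis, the explicit multiplier $K_n=\frac{e^{-\gamma T}-e^{-(a+ib)\lambda_n T}}{(a+ib)\lambda_n-\gamma}$ (the paper's $I_n$), nonvanishing of $K_n$ from the hypothesis on $\gamma$, and the uniform lower bound $\lambda_n|K_n|\ge c>0$ obtained exactly as in the paper's Lemma \ref{lem-lower} (positive limit of the numerator since $a>0$, linear growth of the denominator). The only cosmetic difference is the last step, where the paper applies Cauchy--Schwarz in the form $\|f\|_{L^2}^2\le\bigl(\sum_n\lambda_n^{-2}|f_n|^2\bigr)^{1/2}\bigl(\sum_n\lambda_n^2|f_n|^2\bigr)^{1/2}$ while you perform a frequency splitting at a threshold $\rho$ and optimize over $\rho$; both implement the same spectral interpolation and yield the identical bound $\|f\|_{L^2}^2\le CM\|u(\cdot,T)\|_{L^2}$.
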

The above result resolves the issue that the source profile cannot be observed directly. However, as is known, it costs less to observe the considered system in a subdomain than to make observations in the whole domain $\Omega$. On the other hand, global observation is sometimes difficult or impossible to implement, see e.g., \cite{CL}. 
For this, we will consider the inverse problem with local regional data. In this case we need two fixed  instances $0<T_1<T_2\le T$ and assume that $\partial\Omega$ is an analytic hyper curve.

We are now ready to state the second main theorem.
\begin{theorem}\label{thm-unique-local}
For any given $u_0\in H_0^1(\Omega) \cap H^2(\Omega)$ and $g(t)=e^{-\gamma t}$ with $\gamma$ being not in the set $  \cup_{n \in \mathbb N\setminus\{0\}}
\left\{a\lambda_n + i b\lambda_n \right\}$, let $u_j \in H^1\left(0,T;L^2(\Omega) \right) \cap L^2 \left(0,T; H_0^1(\Omega) \cap H^2(\Omega) \right)$ be the solution to \eqref{eq-gov} with respect to functions $f_j\in L^2(\Omega)$, $j=1,2$. Then $u_j(\cdot,T_1) = u_j(\cdot,T_2)$, $j=1,2$ in $\Omega_0$ implies $f_1 = f_2$ in $\Omega$.
\end{theorem}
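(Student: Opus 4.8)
The plan is to reduce to a single source difference and then to manufacture, out of the two-instant data, a \emph{homogeneous} auxiliary solution whose spatial real-analyticity upgrades the regional information to a global identity. First I set $w:=u_1-u_2$. Because $u_1$ and $u_2$ share the datum $u_0$ and the homogeneous Dirichlet condition, these cancel and $w$ solves
\begin{equation*}
w_t-(a+ib)\Delta w=g(t)f\ \text{ in }\Omega_T,\qquad w|_{\partial\Omega\times(0,T]}=0,\qquad w(\cdot,0)=0,
\end{equation*}
with $f:=f_1-f_2$ and $g(t)=e^{-\gamma t}$; comparing the regional data \eqref{eq-ob} produced by the two pairs gives $w(\cdot,T_1)=w(\cdot,T_2)=0$ in $\Omega_0$, and the goal is $f\equiv0$. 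Expanding $f=\sum_n f_n\varphi_n$ and applying Duhamel's formula with $w(\cdot,0)=0$, and using that $\gamma\neq(a+ib)\lambda_n$ for every $n$, I obtain
\begin{equation*}
w(\cdot,t)=\sum_{n}\frac{e^{-\gamma t}-e^{-\mu_n t}}{\mu_n-\gamma}\,f_n\varphi_n,\qquad \mu_n:=(a+ib)\lambda_n,
\end{equation*}
the series converging in $H^1(0,T;L^2(\Omega))\cap L^2(0,T;H^2(\Omega)\cap H_0^1(\Omega))$ by maximal parabolic regularity.

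The key step is to kill the non-smoothing part of $w$ that is carried by the factor $e^{-\gamma t}$. With $\tau:=T_2-T_1\in(0,T]$, set
\begin{equation*}
z(x,t):=w(x,t+\tau)-e^{-\gamma\tau}w(x,t).
\end{equation*}
A one-line computation shows the inhomogeneity cancels, $z_t-(a+ib)\Delta z=g(t+\tau)f-e^{-\gamma\tau}g(t)f=0$, so $z$ solves the homogeneous equation on $\Omega\times(0,T-\tau]$ with zero Dirichlet data. Since $a>0$, the operator $(a+ib)\Delta$ is sectorial and generates a bounded analytic semigroup, and because $\partial\Omega$ is analytic the associated smoothing is of analytic type; hence $z(\cdot,T_1)=e^{(a+ib)T_1\Delta}z(\cdot,0)$ is real-analytic in $\Omega$ (equivalently, its $n$-th coefficient carries the factor $e^{-\mu_n T_1}$, of modulus $e^{-a\lambda_n T_1}$, which decays super-exponentially). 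The two pieces of data now combine exactly: on $\Omega_0$ both $w(\cdot,T_1)$ and $w(\cdot,T_2)$ vanish, so $z(\cdot,T_1)=w(\cdot,T_2)-e^{-\gamma\tau}w(\cdot,T_1)=0$ there, and the identity theorem for real-analytic functions on the connected set $\Omega$ forces $z(\cdot,T_1)\equiv0$ in all of $\Omega$.

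Finally I read off the spectrum. Matching $\varphi_n$-coefficients in $z(\cdot,T_1)\equiv0$ and using $e^{-\gamma\tau}e^{-\gamma T_1}=e^{-\gamma T_2}$ gives $e^{-\mu_n T_1}\bigl(e^{-\gamma\tau}-e^{-\mu_n\tau}\bigr)f_n=0$ for all $n$, so $f_n=0$ whenever $e^{-\mu_n\tau}\neq e^{-\gamma\tau}$. The residual indices lie in $E:=\{n:(\mu_n-\gamma)\tau\in2\pi i\mathbb{Z}\}$; on $E$ one has $\mathrm{Re}(\mu_n-\gamma)=a\lambda_n-\mathrm{Re}\,\gamma=0$, which pins $\lambda_n$ to a single value and makes $E$ finite. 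For $n\in E$ I return to the residual identity $w(\cdot,T_1)=0$ in $\Omega_0$: since $f_n=0$ off $E$, it collapses to the finite real-analytic sum $\sum_{n\in E}\tfrac{e^{-\gamma T_1}-e^{-\mu_n T_1}}{\mu_n-\gamma}f_n\varphi_n$, which vanishes on $\Omega_0$ hence on $\Omega$, and orthonormality together with the nonvanishing of the multipliers yields $f_n=0$ on $E$ as well; thus $f\equiv0$, i.e. $f_1=f_2$. I expect the main obstacle to be the second step—rigorously upgrading analytic-semigroup smoothing to genuine spatial real-analyticity of $z(\cdot,T_1)$ so that the identity theorem applies (this is exactly where analyticity of $\partial\Omega$ is used)—together with the secondary bookkeeping that $E$ is finite and that the surviving multipliers do not degenerate, which may require discarding a discrete exceptional set of $\gamma$.
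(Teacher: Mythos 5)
Your construction is, at bottom, the paper's own proof in a different costume: the paper decomposes $u=e^{-\gamma t}B(x)-A(x,t)$ with $A$ solving the homogeneous CGLE and forms the combination $A(\cdot,T_2)e^{-\gamma T_1}-A(\cdot,T_1)e^{-\gamma T_2}$, which equals $-e^{-\gamma T_1}z(\cdot,T_1)$ in your notation, i.e.\ your time-shifted function $z(\cdot,t)=w(\cdot,t+\tau)-e^{-\gamma\tau}w(\cdot,t)$ evaluated at $T_1$ is the same object up to a nonzero scalar. Both arguments then proceed identically: spatial analyticity of the homogeneous evolution (the paper invokes Lemma \ref{lem-forward} for this, so you need not rebuild it from analytic-semigroup smoothing), the identity theorem on the connected domain $\Omega$ to globalize the vanishing from $\Omega_0$, and eigenfunction coefficient matching.

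The divergence is in the final coefficient step, and there your ``bookkeeping'' is not pedantry --- it exposes a genuine gap in the paper's proof. After reaching $\frac{f_n}{\mu_n-\gamma}\bigl[e^{-(\mu_n T_2+\gamma T_1)}-e^{-(\mu_n T_1+\gamma T_2)}\bigr]=0$ with $\mu_n=(a+ib)\lambda_n$, the paper simply asserts $f_n=0$; but the bracket vanishes precisely on your set $E=\{n:(\mu_n-\gamma)\tau\in 2\pi i\,\mathbb{Z}\}$, $\tau=T_2-T_1$, and the stated hypothesis $\gamma\neq(a+ib)\lambda_n$ only excludes the $k=0$ case. Your repair --- $E$ is finite because $n\in E$ forces $a\lambda_n=\mathrm{Re}\,\gamma$, and the surviving finite analytic sum in $w(\cdot,T_1)$ vanishing on $\Omega_0$ then kills $f_n$ on $E$ --- is correct exactly when $(\mu_n-\gamma)T_1\notin 2\pi i\,\mathbb{Z}$ for those $n$ (note all $n\in E$ share one $\mu_n$, so the multipliers vanish together or not at all). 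The residual degeneracy you flag cannot be argued away: if, say, $\mathrm{Re}\,\gamma=a\lambda_n$, $\mathrm{Im}\,\gamma=b\lambda_n-2\pi$, $T_1=1$, $T_2=2$, then $\gamma$ satisfies the theorem's hypothesis, yet $u_0=0$, $f=\varphi_n$ produces $u(\cdot,T_1)=u(\cdot,T_2)\equiv 0$ on all of $\Omega$, a counterexample to the statement as written. So the theorem (and the paper's proof) actually requires a strengthened spectral condition, e.g.\ $\gamma\notin\bigcup_{k\in\mathbb{Z},\,n}\left\{a\lambda_n+i\left(b\lambda_n-\frac{2k\pi}{\tau}\right)\right\}$, or your weaker two-condition variant involving $T_1$, in the spirit of the hypothesis already imposed in Theorem \ref{thm-stabi-global}; under such a condition your proof is complete and is in fact more rigorous than the paper's at the one step where rigor matters.
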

By the above uniqueness result, we can further see that when the data have some perturbations, the source term must have corresponding changes.
\begin{theorem}\label{thm-stabi-local}

Under the same assumptions as in Theorem \ref{thm-unique-local}, suppose that $(u_j,f_j)$ is a pair of solutions of our inverse source problem \eqref{eq-gov} and \eqref{eq-ob} corresponding to data $u_j(x,T_1)$ and $u_j(x,T_2)$, $x\in\Omega_0$, $j=1,2$. Then
$$
\|f_1-f_2\|_{\mathcal B} \le \sum_{j=1}^2 \|u_1(\cdot,T_j) - u_2(\cdot,T_j)\|_{L^2(\Omega_0)},
$$
where $\Omega_0$ is a subdomain of $\Omega$ and the norm $\|\cdot\|_{\mathcal B}$ will be given in \eqref{eq10v6}. 
\end{theorem}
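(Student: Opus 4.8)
The plan is to exploit the linearity of \eqref{eq-gov} in $u$ and to reduce the stability estimate to the uniqueness already obtained in Theorem \ref{thm-unique-local}, together with the definition of the norm $\|\cdot\|_{\mathcal B}$ from \eqref{eq10v6}. First I would set $w:=u_1-u_2$ and $f:=f_1-f_2$. Since both $u_1$ and $u_2$ solve \eqref{eq-gov} with the \emph{same} initial value $u_0$ and the same temporal factor $g(t)=e^{-\gamma t}$, the pair $(w,f)$ solves the same problem with vanishing initial data $w(\cdot,0)=0$ and source $g(t)f(x)$; in particular $w$ lies in $H^1(0,T;L^2(\Omega))\cap L^2(0,T;H_0^1(\Omega)\cap H^2(\Omega))$, so the traces $w(\cdot,T_j)=u_1(\cdot,T_j)-u_2(\cdot,T_j)$ are well defined in $L^2(\Omega)$ and, restricted to $\Omega_0$, are exactly the data differences on the right-hand side.

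Next I would record the spectral representation of $w$. Writing $f=\sum_n f_n\varphi_n$, $w(x,t)=\sum_n w_n(t)\varphi_n(x)$ and solving the decoupled scalar equations $w_n'+(a+ib)\lambda_n w_n=e^{-\gamma t}f_n$ with $w_n(0)=0$ gives, with $\mu_n:=(a+ib)\lambda_n$,
\[
w_n(T_j)=f_n\,\frac{e^{-\gamma T_j}-e^{-\mu_n T_j}}{\mu_n-\gamma},\qquad j=1,2,
\]
which is legitimate because the hypothesis $\gamma\notin\bigcup_{n}\{a\lambda_n+ib\lambda_n\}=\bigcup_n\{\mu_n\}$ keeps every denominator nonzero. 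This identity is the bridge between the unknown coefficients $f_n$ and the measured traces $w(\cdot,T_j)$, and it is the object on which the norm $\|\cdot\|_{\mathcal B}$ of \eqref{eq10v6} is built through the local source-to-solution map $f\mapsto\bigl(w(\cdot,T_1)|_{\Omega_0},w(\cdot,T_2)|_{\Omega_0}\bigr)$.

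With these preparations the inequality itself is short. By the definition \eqref{eq10v6} of $\|\cdot\|_{\mathcal B}$ and by linearity, $\|f_1-f_2\|_{\mathcal B}$ equals the corresponding (local) combination of $\|w(\cdot,T_j)\|_{L^2(\Omega_0)}=\|u_1(\cdot,T_j)-u_2(\cdot,T_j)\|_{L^2(\Omega_0)}$, and the passage from that $\ell^2$-type combination to the stated sum is the elementary inequality $\sqrt{\alpha^2+\beta^2}\le\alpha+\beta$. The genuinely nontrivial point is not this last step but that $\|\cdot\|_{\mathcal B}$ is a \emph{norm} rather than a mere seminorm, i.e.\ that $\|f\|_{\mathcal B}=0$ forces $f=0$: this is precisely the uniqueness Theorem \ref{thm-unique-local}, whose proof rests on analytic continuation from $\Omega_0$ to $\Omega$ (using that $\partial\Omega$ is analytic, so each $\varphi_n$ and hence $w(\cdot,T_j)$ is real-analytic in $\Omega$) followed by the spectral identity above.

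Accordingly, the part I expect to require the most care is the well-definedness and norm property underlying \eqref{eq10v6}, not the final estimate. One must check that the source-to-trace map is injective on $L^2(\Omega)$ under the relaxed spectral condition on $\gamma$, and this is exactly where the two distinct instants $T_1<T_2$ are essential: a single time could leave a coefficient $f_n$ invisible whenever $e^{-\gamma T_j}=e^{-\mu_n T_j}$, while the second observation removes this degeneracy. Once Theorem \ref{thm-unique-local} is invoked to guarantee that $\|\cdot\|_{\mathcal B}$ is a norm, the stability bound follows immediately from the reduction to $w$ and the triangle-type inequality.
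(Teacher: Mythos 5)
Your overall route coincides with the paper's: reduce by linearity to $w:=u_1-u_2$, $f:=f_1-f_2$ with $w(\cdot,0)=0$, observe that the stability estimate is nearly immediate from the definition of $\|\cdot\|_{\mathcal B}$, and locate the real content in the fact that $\|\cdot\|_{\mathcal B}$ is a genuine norm, which is Lemma \ref{lem-dualnorm} and rests on the uniqueness Theorem \ref{thm-unique-local}. However, there is one step you assert rather than prove, and it is the only substantive step of the paper's argument. The norm \eqref{eq10v6} is \emph{not} defined through the source-to-trace map $f\mapsto\bigl(w(\cdot,T_1)|_{\Omega_0},w(\cdot,T_2)|_{\Omega_0}\bigr)$, as your write-up assumes; it is defined by duality, as the supremum over unit test functions $\varphi_j\in C_0^\infty(\Omega_0)$ of $\sum_{j=1}^2\bigl|\int_\Omega\int_0^{T_j} g(t)\,v_j(x,t)\,f(x)\,\mathrm{d}t\,\mathrm{d}x\bigr|$, where $v_j$ solves the backward adjoint problem \eqref{eq-dual} with final datum $\varphi_j$. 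Passing from this adjoint pairing to the trace pairing $\int_{\Omega_0} w(x,T_j)\varphi_j(x)\,\mathrm{d}x$ is precisely the integral identity of Lemma \ref{le4.2v5}, obtained by integrating by parts against $v_j$ and using $w(\cdot,0)=0$ and $v_j(\cdot,T_j)=\varphi_j$ (this is also where the linearity reduction is actually consumed). Your claim that ``by the definition \eqref{eq10v6}, $\|f_1-f_2\|_{\mathcal B}$ equals the local combination of trace norms'' is therefore a consequence of that lemma, not a definition, and leaving it unproved is the gap in your proposal.

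Two smaller points. First, once the integral identity is in hand, Cauchy--Schwarz with $\|\varphi_j\|_{L^2(\Omega_0)}=1$ bounds each term by $\|u_1(\cdot,T_j)-u_2(\cdot,T_j)\|_{L^2(\Omega_0)}$, and since \eqref{eq10v6} involves a sum of absolute values, the stated $\ell^1$-type bound comes out directly; your intermediate step $\sqrt{\alpha^2+\beta^2}\le\alpha+\beta$ is unnecessary and reflects a misreading of the norm as an $\ell^2$ combination. Second, your spectral computation of $w_n(T_j)$ is correct but plays no role in the stability estimate itself; it belongs to the uniqueness argument (Theorem \ref{thm-unique-local}) that underlies the norm property in Lemma \ref{lem-dualnorm}, which you correctly identify as the place where the two distinct instants and the analytic continuation are essential.
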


As is known, inverse problems exhibit inherent sensitivity to data noise due to their ill-posedness. Deep learning embeds physical equations as constraints into the loss function, constructing an implicit regularization framework that explicitly mitigates ill-posedness of the inverse problems, which has become a key tool for studying inverse problems in partial differential equations. Zhang, Li, and Liu  \cite{Zhang2023} proposed a deep neural network-based method to solve the inverse source problem. Their approach introduced a loss function combining residual derivatives and regularized observation data to enhance the regularity of solutions and address the ill-posedness. However, the application of deep learning to inverse problems of CGLE remains in its early stages. Theoretical understanding and algorithmic development are still limited, and further systematic studies are needed in aspects such as model construction, convergence analysis, and numerical validation.

This is a first work focusing on solving inverse source problems for 
\eqref{eq-gov} and \eqref{eq-ob}  from regional observation at fixed times, particularly when the observation domain are subset of the whole domain $\Omega$. 
Here we list the key challenges and the main innovations. 
\begin{enumerate}
    \item For the inverse source problem with highly unstable local observations, we overcome ill-posedness caused by the subdomain $\Omega_0 \subsetneq \Omega$ by proposing two instants ($T_1 < T_2$) and a variational norm $\|\cdot\|_{\mathcal{B}}$, which significantly reduces the spatial observation requirement compared to classical full-domain methods.
    \item  To handle the complex-valued solution of the CGLE, we develop Complex-PINNs (C-PINNs), which maintains numerical stability, overcoming limitations of real-valued PINNs for dissipative systems.
    \item We design a multi-residual weighted loss function with adaptive weights, and we establish the first quantitative characterization of the joint error for the source term 
    $f(x)$ and solution $u(x,t)$ in the complex-valued Ginzburg-Landau equation, demonstrating its square-root dependence on the PINNs residuals and data noise level (Theorems \ref{thm-pinns-glocal} and \ref{thm-pinns-local} in Section \ref{sec-num}).  
\end{enumerate}

To prove the main theorems mentioned above, several technical lemmas and settings are needed, so we collect them in Section \ref{sec-pre}. Preparing all necessities, by the eigenfunction expansion argument, we will finish the proof of Theorem \ref{thm-stabi-global} in Section \ref{sec-global}. In the following Section \ref{sec-local}, with the $x-$analyticity of the solution to the complex Ginzburg-Landau equation in Lemma \ref{lem-forward}, we show the uniqueness result of the inverse problem from local data at two instants. As an application, an integral identity is presented with which the stability in Theorem \ref{thm-stabi-local} can be constructed using a suitable topology.
In Section \ref{sec-num}, an algorithm based on PINNs is designed to obtain a numerical solution to our inverse source problem, and some typical numerical experiments are tested to show the validity of the PINNs scheme in Section \ref{se6v25}. Finally, concluding remarks are given in Section \ref{sec-rem}.

\section{Preliminaries}\label{sec-pre}

In this section, we establish the well-posedness theory of CGLE on domain $\Omega$.

On the basis of the Dirichlet eigensystem of the operator $-\Delta$, we define the fractional power $(-\Delta )^\alpha$ for $\alpha >0$ as
$$
(-\Delta )^\alpha \varphi := \sum_{n=1}^\infty \lambda_n^\alpha \langle \varphi,\varphi_n \rangle \varphi_n,\quad \varphi\in \mathcal{D}((-\Delta)^\alpha),
$$
where $\langle \cdot, \cdot \rangle$ denotes the inner product in $L^2(\Omega)$ and $\mathcal{D}((-\Delta)^\alpha)$ is a Hilbert space defined by
$$
\mathcal{D}((-\Delta)^\alpha) = \left\{ \varphi\in L^2(\Omega); \sum_{n=1}^\infty \lambda_n^{2  \alpha } |\langle \varphi,\varphi_n \rangle|^2 < \infty\right\}
$$
with the norm
$$
\|\varphi\|_{\mathcal{D}((-\Delta)^\alpha )} := \|(-\Delta)^\alpha  \varphi\|_{L^2(\Omega)} = \left( \sum_{n=1}^\infty \lambda_n^{2 \alpha } |\langle \varphi,\varphi_n \rangle|^2 \right)^{\frac12}.
$$
On the basis of the above settings and employing basic techniques such as eigenfunction expansion, and referring to the methods in \cite{evans2010partial}, we can obtain the expression of the solution to \eqref{eq-gov} as
\begin{equation}\label{sol-u}
\begin{aligned}
u(x, t) =
& 
\sum_{n=1}^\infty \left\langle u_0, \varphi_{n}\right\rangle  \exp\left\{-(a+ib)\lambda_{n}t\right \}  \varphi_{n}(x)
\\
&
+\sum_{n=1}^\infty \left\langle f, \varphi_{n}\right\rangle \int_{0}^{t} \exp\left\{-(a+ib)\lambda_{n}(t-\tau)\right \} g(\tau) \, \mathrm{d}  \tau \varphi_{n}(x),
\end{aligned}
\end{equation}
and get the regularity estimate. Although such methods and conclusions are quite common, in order to ensure the completeness of this paper, we still provide a detailed proof for the well-posedness of the following general  initial-boundary value problem
\begin{equation}\label{eq-gov'}
\begin{cases}
u_{t}-(a+ib)\Delta u = F(x,t), & (x,t) \in \Omega_T:=\Omega\times(0,T], \\
u(x, t)=0, & (x,t) \in \partial \Omega\times(0,T], \\
u(x, 0)=u_0(x), & x \in \Omega . 
\end{cases}
\end{equation}
\begin{lemma}\label{lem-forward}
Letting $u_0 \in H_0^1(\Omega) \cap H^2(\Omega)$, $F\in L^2(\Omega_T)$, then the initial-boundary value problem \eqref{eq-gov'} admits a unique solution $u\in H^1 \left(0,T;L^2(\Omega) \right) \cap L^2 \left(0,T; H_0^1(\Omega) \cap H^2(\Omega) \right)$ and can be represented by 
\begin{equation}\label{sol-uvnew}
\begin{aligned}
u(x, t) =
& 
\sum_{n=1}^\infty \left\langle u_0, \varphi_{n}\right\rangle  \exp\left\{-(a+ib)\lambda_{n}t\right \}  \varphi_{n}(x)
\\
&
+\sum_{n=1}^\infty \left\langle f, \varphi_{n}\right\rangle \int_{0}^{t} \exp\left\{-(a+ib)\lambda_{n}(t-\tau)\right \} g(\tau) \, \mathrm{d}  \tau \varphi_{n}(x),
\end{aligned}
\end{equation}
 Moreover, the solution $u$ can be estimated as follows
\begin{equation*}
    \|u\|_{H^1 \left(0,T;L^2(\Omega) \right)} + \|u\|_{L^2 \left(0,T; H^2(\Omega) \right)} \le C\|u_0\|_{H^2(\Omega)} + C\|F\|_{L^2(\Omega_T)},
\end{equation*}
where the constant $C>0$ is independent of $T$, $u$, $u_0$ and $F$. If further assuming $F=0$ and the boundary $\partial\Omega$ is an analytic hyper curve, then  $x\mapsto v(x,\cdot)\in L^2(0,T)$ is analytic with respect to the variable $x\in\Omega$.
\end{lemma}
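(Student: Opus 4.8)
\section*{Proof proposal}

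The plan is to handle the four assertions of Lemma~\ref{lem-forward} --- existence and uniqueness, the series representation, the regularity bound, and the spatial analyticity --- by reducing \eqref{eq-gov'} to a decoupled family of scalar ODEs along the Dirichlet eigenbasis $\{\varphi_n\}$ and then reading each assertion off the resulting formula. Throughout, the structural fact I would exploit is that $\mathrm{Re}(a+ib)=a>0$, so that although the symbol is complex the modulus of the propagator is the genuinely decaying factor $\lvert e^{-(a+ib)\lambda_n t}\rvert=e^{-a\lambda_n t}$; this is what makes the problem parabolic and drives every estimate below.

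First I would set $u_n(t):=\langle u(\cdot,t),\varphi_n\rangle$ and $F_n(t):=\langle F(\cdot,t),\varphi_n\rangle$. Projecting \eqref{eq-gov'} onto $\varphi_n$ turns it into $u_n'+(a+ib)\lambda_n u_n=F_n$ with $u_n(0)=\langle u_0,\varphi_n\rangle$, whose variation-of-constants solution, resummed as $u(x,t)=\sum_n u_n(t)\varphi_n(x)$, is exactly the representation \eqref{sol-uvnew}; uniqueness is immediate from the injectivity of the expansion, once the series is shown to converge in the asserted spaces.

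For the regularity estimate I would bound the two norms directly from the series. Using $\|u(\cdot,t)\|_{H^2\cap H_0^1}^2\simeq\sum_n\lambda_n^2\lvert u_n(t)\rvert^2$, the homogeneous part contributes $\sum_n\lambda_n^2\lvert\langle u_0,\varphi_n\rangle\rvert^2\int_0^T e^{-2a\lambda_n t}\,dt\le\frac{1}{2a}\sum_n\lambda_n\lvert\langle u_0,\varphi_n\rangle\rvert^2\le C\|u_0\|_{H^2}^2$, while the inhomogeneous part is a time convolution of $F_n$ against the kernel $s\mapsto\lambda_n e^{-(a+ib)\lambda_n s}$, whose $L^1(0,\infty)$-norm equals $1/a$ uniformly in $n$. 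Young's inequality then gives $\|\lambda_n u_n\|_{L^2(0,T)}\le a^{-1}\|F_n\|_{L^2(0,T)}$, and summing in $n$ with $\sum_n\|F_n\|_{L^2(0,T)}^2=\|F\|_{L^2(\Omega_T)}^2$ yields the $L^2(0,T;H^2)$ bound with a constant independent of $T$. The $H^1(0,T;L^2)$ bound then follows by reading $u_t=(a+ib)\Delta u+F$ off the equation and invoking the bound just obtained.

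The spatial analyticity when $F=0$ is the step I expect to be the main obstacle. Here $u(x,t)=\sum_n\langle u_0,\varphi_n\rangle e^{-(a+ib)\lambda_n t}\varphi_n(x)$, and the mechanism is the analytic smoothing of the complex parabolic semigroup: for every $t>0$ the factor $e^{-a\lambda_n t}$ decays faster than any power of $\lambda_n$, so $u(\cdot,t)\in\bigcap_k\mathcal D((-\Delta)^k)$ and is $C^\infty$. Upgrading smoothness to real-analyticity \emph{up to the boundary} is where the analyticity of $\partial\Omega$ is essential: I would invoke the classical analytic-regularity theory for parabolic boundary value problems (Morrey--Nirenberg type), whose hypotheses --- analytic principal coefficient, here the constant $a+ib$, ellipticity guaranteed by $a>0$, analytic boundary, and zero Dirichlet data --- are all met, to conclude that for each fixed $t>0$ the map $x\mapsto u(x,t)$ is real-analytic on $\overline\Omega$ with coefficient bounds and radius of convergence locally uniform in $t\in(0,T]$. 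I would caution that purely term-by-term differentiation of the series is lossy: the pointwise eigenfunction derivative bounds available on an analytic domain carry a spurious $\lvert\alpha\rvert!$ from the boundary layer and yield only a Gevrey-$\tfrac32$ estimate, so the genuine analyticity has to come from the boundary-analyticity theorem rather than from crude term-wise estimates. Finally, to pass from fixed-$t$ analyticity to the $L^2(0,T)$-valued statement, I would integrate the $t$-locally-uniform Cauchy estimates on $\|\partial_x^\alpha u(x,\cdot)\|_{L^2(0,T)}$; the degeneration of the radius of analyticity as $t\to0^+$ is the delicate point, controlled by the integrability in $t$ of these bounds together with $u_0\in H^2\cap H_0^1$.
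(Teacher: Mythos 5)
Your proposal is correct and follows essentially the same route as the paper: eigenfunction expansion giving the representation \eqref{sol-uvnew}, Young's convolution inequality with the kernel $s\mapsto\lambda_n e^{-a\lambda_n s}$ (whose $L^1$-norm $1/a$ is what makes the constant $T$-independent) for the $L^2\left(0,T;H^2(\Omega)\right)$ bound, and reading $u_t=(a+ib)\Delta u+F$ off the equation for the $H^1\left(0,T;L^2(\Omega)\right)$ bound. The only point of divergence is the spatial analyticity, which the paper does not actually prove (it defers to the $b=0$ case and Ch.~2 of \cite{evans2010partial}), whereas you correctly flag it as the delicate step and outline a Morrey--Nirenberg analytic-regularity argument, including the valid caution that naive term-wise estimates on the eigenfunction series only yield Gevrey-$\frac{3}{2}$ regularity and that the radius of analyticity degenerates as $t\to0^+$.
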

\begin{proof}
We will show that the function $u$ defined in \eqref{sol-uvnew} is the required solution to the initial-boundary value problem \eqref{eq-gov}. We denote the two series on the right hand side in \eqref{sol-uvnew} as $v,w$ respectively. Firstly, we consider the spatial regularity of the function $w$. A direct calculation yields that
\begin{align*}
\|-\Delta w\|_{L^2(\Omega)}^2
&= \left\|\sum_{n=1}^\infty  \lambda_n  \int_{0}^{t}  \exp\left\{-(a+ib)\lambda_{n}(t-\tau)\right \} F_n(\tau)  \, \mathrm{d} \tau \varphi_n\right\|_{L^2(\Omega)} ^2 \\
&=  \sum_{n=1}^\infty \lambda_n^2 \left(\int_{0}^{t}  \exp\left\{-(a+ib)\lambda_{n}(t-\tau)\right \}  F_n(\tau) \, \mathrm{d} \tau  \right)^2 .
\end{align*}
Here $F_n:=\left\langle F, \varphi_{n}\right\rangle$. From the Young inequality for the convolution, it follows that
\begin{align*}
\int_0^T\left\| -\Delta w(t)\right\|_{L^2(\Omega)}^2 \,\mathrm{d}t 
\le \sum_{n=1}^\infty\lambda_n^2 \left[\int_0^T \exp\left\{-\lambda_{n} t a\right \} \, \mathrm{d} t \right]^2 \int_0^T |F_n(t) |^2 \, \mathrm{d} t,
\end{align*}
which together with the fact that
$$
\int_0^T \exp\left\{-\lambda_{n}t a\right \} \,\mathrm{d} t = \frac{1-e^{-\lambda_n T a}}{\lambda_na},
$$
implies
\begin{align*}
\int_0^T\left\| -\Delta w(t)\right\|_{L^2(\Omega)}^2 
\, \mathrm{d} t 
\le  \sum_{n=1}^\infty \int_0^T F_n^2(\tau) \, \mathrm{d} \tau \frac{ \left(1-e^{-\lambda_n Ta} \right)^2}{a^2}  \le C\|F\|_{L^2(\Omega_T)}^2,
\end{align*}
that is, $\|w\|_{L^2 \left(0,T;H^2(\Omega) \right)} \le C\|f\|_{L^2(\Omega)}$. Next, we consider the temporal-regularity estimate. By a direct calculation, we take $t-$derivative on both sides of \eqref{sol-uvnew} and we find that
\begin{align*}
\partial_t w = & \sum_{n=1}^\infty  \partial_t \int_{0}^{t} \exp\left\{-(a+ib)\lambda_{n}(t-\tau)\right \} F_n(\tau) \, \mathrm{d}  \tau \varphi_{n}(x) \\
=&  \sum_{n=1}^\infty F_n(t) \varphi_n(x) 
+ (a+ib) \sum_{n=1}^\infty \lambda_n  \varphi_n(x) \int_0^t \exp\left\{-(a+ib)\lambda_{n}(t-\tau)\right \} F_n(\tau) \, \mathrm{d} \tau \\
= &  F + (a+ib)\Delta w.
\end{align*}
By taking the $L^2 \left(0,T;L^2(\Omega) \right)$-norm on both sides of the above equation and in view of the spatial-regularity estimate for $u$, we can obtain the following temporal regularity estimate:
\begin{align*}
\|\partial_t w\|_{L^2 \left(0,T;L^2(\Omega) \right)}
&\le \|F\|_{L^2(\Omega_T)} + \|-\Delta w\|_{L^2(\Omega_T)}
\le  C\|F\|_{L^2 (\Omega_T)}.
\end{align*}
Collecting all the above results, we see that
\begin{align*}
\|w\|_{H^1 \left(0,T;L^2(\Omega) \right)} + \|w\|_{L^2 \left(0,T;H^2(\Omega) \right)}
\le C\|F\|_{L^2(\Omega_T)}.
\end{align*}
It remains to estimate $\Delta v$ and $\partial_t v$. For this, similar to the above argument, it follows that
\begin{align*}
\|-\Delta v\|_{L^2(\Omega)}^2
=&   \sum_{n=1}^\infty \lambda_n^2 \langle u_0,\varphi_n\rangle^2 \left| \exp\left\{-(a+ib)\lambda_{n}t\right \} \right|^2 \\
\le& C\sum_{n=1}^\infty \langle -\Delta u_0,\varphi_n\rangle^2 \left( \exp\left\{-\lambda_{n}ta\right \} \right)^2
\le C\|u_0\|_{H^2(\Omega)}^2,
\end{align*}
and
\begin{align*}
\|\partial_t v\|_{L^2(\Omega)}^2
= \|(a+ib)\Delta v\|_{L^2(\Omega)} ^2 
\le C\|u_0\|_{H^2(\Omega)}^2.
\end{align*}
Collecting all the above estimates, we finish the proof of the well-posedness of the solution to the problem. The proof of the $x-$analyticity of the solution is extremely similar to the case of $b=0$, so we omit the proof. One can refer to the argument in Ch.2 of \cite{evans2010partial}.
\end{proof}
As an application of the above result, we consider the regularity estimate for the initial-boundary value problem
\begin{equation}\label{eq-gov''}
\begin{cases}
u_{t}-(a+ib)\Delta u = 0, & (x,t) \in \Omega_T, \\
u(x, t)=\phi(x,t), & (x,t) \in \partial \Omega\times(0,T], \\
u(x, 0)=u_0(x), & x \in \Omega.
\end{cases}
\end{equation}
Here $\phi\in H^1 \left(0,T;H^{\frac32}(\partial\Omega) \right)$. We construct $\Phi$ satisfies the following elliptic problem
\begin{align*}
\begin{cases}
 - \Delta_x \Phi(x,t) = 0 &\text{ in } \Omega_T, \\
\Phi(x,t) = \phi(x,t) &\text{ on } \partial \Omega \times (0, T).
\end{cases}
\end{align*}
From theories of elliptic equation, it is not difficult to check that $\Phi\in H^1 \left(0,T;H^2(\Omega) \right)$ with the regularity estimate $\|\Phi(\cdot,t)\|_{H^2(\Omega)} \le C\|\phi(\cdot,t)\|_{H^{\frac32}(\partial\Omega)}$, and moreover, we can see that $W= u-\Phi$ satisfies
$$
\begin{cases}
W_{t}-(a+ib)\Delta W = \partial_t \Phi(x,t), & (x,t) \in \Omega_T, \\
W(x, t)=0, & (x,t) \in \partial \Omega\times(0,T], \\
W(x, 0)=u_0(x) - \Phi(x,0), & x \in \Omega.
\end{cases}
$$
Consequently, we can get the estimate for $W$ by using the regularity estimate in Lemma \ref{lem-forward}, hence the well-posedness of the problem \eqref{eq-gov''} can be established. Indeed, we have
\begin{corollary}\label{coro-forward}
Letting $u_0\in H_0^1(\Omega) \cap H^2(\Omega)$ and $\phi\in H^1 \left(0,T;H^{\frac32}(\partial\Omega) \right)$, then the initial-boundary value problem \eqref{eq-gov''} admits a unique solution $u\in H^1 \left(0,T;L^2(\Omega) \right) \cap L^2 \left(0,T; H_0^1(\Omega) \cap H^2(\Omega) \right)$ and can be estimated as follows
\begin{equation*}
    \|u\|_{H^1 \left(0,T;L^2(\Omega) \right)} + \|u\|_{L^2 \left(0,T; H^2(\Omega) \right)} \le C\|\phi\|_{H^1 \left(0,T;H^{\frac32}(\partial\Omega) \right)} +C\|u_0\|_{H^2(\Omega)},
\end{equation*}
where the constant $C>0$ is independent of $T$, $u$ and $F$. If further assuming $F=0$ and the boundary $\partial\Omega$ is an analytic hyper curve, then  $x\mapsto v(x,\cdot)\in L^2(0,T)$ is analytic with respect to the variable $x\in\Omega$.

\end{corollary}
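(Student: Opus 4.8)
The plan is to reduce the inhomogeneous-boundary problem \eqref{eq-gov''} to the homogeneous-boundary setting already covered by Lemma \ref{lem-forward}, via the elliptic lifting $\Phi$ introduced just above the statement. Concretely, for each fixed $t$ I would solve the Dirichlet problem $-\Delta_x\Phi(\cdot,t)=0$ in $\Omega$ with $\Phi(\cdot,t)=\phi(\cdot,t)$ on $\partial\Omega$, and record the elliptic regularity bound $\|\Phi(\cdot,t)\|_{H^2(\Omega)}\le C\|\phi(\cdot,t)\|_{H^{3/2}(\partial\Omega)}$. Then I set $W=u-\Phi$, which satisfies the homogeneous-boundary problem with interior source $\partial_t\Phi$ and initial datum $u_0-\Phi(\cdot,0)$, and I apply Lemma \ref{lem-forward} to $W$.

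The first step is the time regularity of the lift. Since the operator $-\Delta_x$ and the domain are time-independent, differentiating the Dirichlet problem in $t$ shows that $\partial_t\Phi(\cdot,t)$ solves the same elliptic problem with boundary datum $\partial_t\phi(\cdot,t)$; hence $\|\partial_t\Phi(\cdot,t)\|_{H^2(\Omega)}\le C\|\partial_t\phi(\cdot,t)\|_{H^{3/2}(\partial\Omega)}$. Integrating in $t$ and using $\phi\in H^1(0,T;H^{3/2}(\partial\Omega))$ gives $\Phi\in H^1(0,T;H^2(\Omega))$, and in particular $\partial_t\Phi\in L^2(\Omega_T)$, which is exactly the hypothesis on the source demanded by Lemma \ref{lem-forward}. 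Note also that the $L^2$-in-time norms of $\Phi$ and of $\partial_t\Phi$ inherit $T$-independent constants directly from the pointwise-in-$t$ elliptic estimate.

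The second step is to apply the lemma and assemble. Lemma \ref{lem-forward}, applied to $W$ with source $\partial_t\Phi$, yields existence, uniqueness, and the bound
\begin{equation*}
\|W\|_{H^1(0,T;L^2(\Omega))}+\|W\|_{L^2(0,T;H^2(\Omega))}\le C\|u_0-\Phi(\cdot,0)\|_{H^2(\Omega)}+C\|\partial_t\Phi\|_{L^2(\Omega_T)}.
\end{equation*}
Writing $u=W+\Phi$, applying the triangle inequality, and controlling $\Phi$ and $\Phi(\cdot,0)$ through the two elliptic estimates above delivers the claimed estimate; uniqueness transfers from $W$ to $u$ because the lift $\Phi$ is uniquely determined by $\phi$.

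The main obstacle I anticipate is the regularity of the modified initial datum $u_0-\Phi(\cdot,0)$: Lemma \ref{lem-forward} requires it to lie in $H_0^1(\Omega)\cap H^2(\Omega)$, yet $\Phi(\cdot,0)$ equals $\phi(\cdot,0)$ on $\partial\Omega$ and need not vanish there, so $W(\cdot,0)$ fails to be in $H_0^1(\Omega)$ unless the corner-compatibility condition $u_0|_{\partial\Omega}=\phi(\cdot,0)$ holds; moreover bounding $\|\Phi(\cdot,0)\|_{H^2(\Omega)}$ requires a time-trace $H^1(0,T)\hookrightarrow C([0,T])$ whose constant is $T$-sensitive for small $T$. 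I would therefore either impose this compatibility as a standing assumption or weaken the functional setting for $W$ near $t=0$. Finally, for the analyticity assertion I would observe that \eqref{eq-gov''} is homogeneous in the interior, so the spatial analyticity of $u$ follows from the same series / analytic-hypoellipticity argument invoked for Lemma \ref{lem-forward} (the spurious ``$F=0$'' hypothesis being automatically satisfied), with $\Phi$ contributing an interior-harmonic, hence real-analytic, part.
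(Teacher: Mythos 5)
Your proposal follows essentially the same route as the paper: construct the harmonic lift $\Phi$ of the boundary data $\phi$, set $W=u-\Phi$, and apply Lemma \ref{lem-forward} to $W$ with interior source $\partial_t\Phi$ and initial datum $u_0-\Phi(\cdot,0)$. Your write-up is in fact more careful than the paper's own sketch, notably in flagging the compatibility condition $u_0|_{\partial\Omega}=\phi(\cdot,0)$ needed for $W(\cdot,0)\in H_0^1(\Omega)\cap H^2(\Omega)$, a point the paper passes over in silence.
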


\section{Global data measurement}\label{sec-global}

In this section, letting  $g(t):=e^{-\gamma t}$,  $\Omega_0=\Omega$ and $T_1=T_2=T$, we consider the uniqueness and stability of the inverse problem from the measurement at a fixed time $T$. For this, we first give the following useful lemma.

\begin{lemma}\label{lem-exp-nonzero}
Letting $\lambda_n>0$ and $a,b\in \mathbb{R}$ be fixed constants and  $g=e^{-\gamma t}$ with $\gamma \neq a\lambda_n + i\left(b\lambda_n - \frac{2k\pi}T \right) $ for any $k\in\mathbb Z,n \in \mathbb N\setminus\{0\}$, then
$$
\int_{0}^{T} \exp\left\{-(a+ib)\lambda_n (T-\tau)\right \} e^{-\gamma \tau}  \, \mathrm{d}  \tau \neq 0.
$$
\end{lemma}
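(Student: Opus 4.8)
The plan is to evaluate the integral in closed form and read off precisely when it vanishes; this is a direct computation rather than a conceptual argument. Writing $\mu := (a+ib)\lambda_n$ for brevity, I would first pull the constant factor $e^{-\mu T}$ outside the integral, reducing the quantity of interest to
$$
\int_0^T \exp\{-(a+ib)\lambda_n(T-\tau)\}\, e^{-\gamma\tau}\,\mathrm{d}\tau = e^{-\mu T}\int_0^T e^{(\mu-\gamma)\tau}\,\mathrm{d}\tau.
$$

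Second, I would note that the exponent coefficient $\mu - \gamma$ cannot vanish: the value $\gamma = \mu = a\lambda_n + ib\lambda_n$ is exactly the $k=0$ member of the excluded set, so by hypothesis $\mu - \gamma \neq 0$. Hence the primitive of the integrand is the usual exponential one, and carrying out the integration and simplifying gives
$$
e^{-\mu T}\cdot\frac{e^{(\mu-\gamma)T}-1}{\mu-\gamma} = \frac{e^{-\gamma T}-e^{-\mu T}}{\mu-\gamma}.
$$

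Third, since the denominator is nonzero, this expression is zero if and only if $e^{-\gamma T}=e^{-\mu T}$, equivalently $e^{(\mu-\gamma)T}=1$, equivalently $(\mu-\gamma)T \in 2\pi i\,\mathbb{Z}$. Unwinding this, $\gamma = \mu - \tfrac{2k\pi i}{T} = a\lambda_n + i\bigl(b\lambda_n - \tfrac{2k\pi}{T}\bigr)$ for some $k\in\mathbb Z$, which is precisely the excluded set. As $\gamma$ is assumed not to lie in this set, the integral is nonzero, finishing the proof.

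There is essentially no analytic obstacle here; the only point requiring care is the bookkeeping on the excluded set. Specifically, one must recognize that the index $k=0$ guarantees $\mu \neq \gamma$ — so that no separate treatment of a constant integrand is needed — while the full family of indices $k\in\mathbb Z$ is exactly what encodes the periodicity condition $e^{(\mu-\gamma)T}=1$ under which the two boundary exponentials $e^{-\gamma T}$ and $e^{-\mu T}$ coincide and the numerator collapses.
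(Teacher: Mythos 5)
Your proof is correct and takes essentially the same approach as the paper: evaluate the integral in closed form and observe that it vanishes precisely when $\gamma$ lies in the excluded set. The only minor difference is that the paper handles the case $\gamma=(a+ib)\lambda_n$ separately (showing the integral, then equal to $T e^{-(a+ib)\lambda_n T}$, is nonzero even there), whereas you invoke the $k=0$ member of the excluded set to rule that case out from the start; both are valid bookkeeping choices.
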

\begin{proof}
In the case of $\gamma = (a+ib)\lambda_n$, it is trivial that
$$
\left|\int_{0}^{T} \exp\left\{-(a+ib)\lambda_{n}(T-\tau)\right \} e^{-\gamma \tau}  \, \mathrm{d}  \tau \right| = T \left|e^{-(a+ib)\lambda_{n}T} \right| = T\left|e^{-a\lambda_{n}T} \right|\neq 0.
$$
We only need to show the above assertion in the case of  $\gamma \neq (a+ib) \lambda_{n}$. In this case, we see that
\begin{align*}
&\int_{0}^{T} \exp\left\{-(a+ib)\lambda_{n}(T-\tau)\right \} e^{-\gamma \tau} \, \mathrm{d} \tau  \\
=& \exp\left\{-(a+ib)\lambda_{n} T \right \} \frac{\exp\{ ((a+ib)\lambda_n -\gamma)T\} - 1 }{(a+ib)\lambda_n - \gamma}. 
\end{align*}
If the above equation vanishes, then
$$
\exp\{ ((a+ib)\lambda_n -\gamma)T\} = 1,
$$
which implies that
$$
((a+ib)\lambda_n -\gamma) T = 2k\pi i, \quad k\in\mathbb Z,n \in \mathbb N\setminus\{0\}.
$$
We find that
$$
\gamma= a\lambda_n + i \left(b\lambda_n - \frac{2k\pi}T  \right),\quad k\in\mathbb Z,n \in \mathbb N\setminus\{0\},
$$
which contradicts to the assumption for $\gamma$. This completes the proof of the lemma.
\end{proof}
\begin{corollary}
Under the same assumptions in Lemma \ref{lem-exp-nonzero}, we suppose the function $u$ satisfying $f\in H^1\left(0,T;L^2(\Omega) \right) \cap L^2 \left(0,T; H_0^1(\Omega) \cap H^2(\Omega) \right)$ is a solution to \eqref{eq-gov} with $u_0=0$. Then $u(\cdot,T) =0$ in $\Omega$ implies $f = 0$ in $\Omega$.
\end{corollary}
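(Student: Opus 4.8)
The plan is to read off the conclusion directly from the eigenfunction representation \eqref{sol-u} combined with the non-vanishing established in Lemma \ref{lem-exp-nonzero}. Since $u_0=0$, the first series in \eqref{sol-u} drops out and only the source contribution survives. First I would introduce, for each $n$, the temporal factor
$$
c_n := \int_0^T \exp\left\{-(a+ib)\lambda_n(T-\tau)\right\} e^{-\gamma\tau}\,\mathrm d\tau,
$$
so that evaluating the (reduced) representation at $t=T$ gives the $L^2(\Omega)$-identity $u(\cdot,T)=\sum_{n=1}^\infty \langle f,\varphi_n\rangle\, c_n\,\varphi_n$.

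Next I would use that $\{\varphi_n\}_{n=1}^\infty$ is an orthonormal basis of $L^2(\Omega)$. Projecting the hypothesis $u(\cdot,T)=0$ against each $\varphi_n$ forces every Fourier coefficient to vanish, i.e. $\langle f,\varphi_n\rangle\, c_n = 0$ for all $n\ge 1$. The regularity $f\in H^2(\Omega)\cap H_0^1(\Omega)$ (so that $\sum_n \lambda_n^2|\langle f,\varphi_n\rangle|^2<\infty$) together with the boundedness of $c_n$ guarantees the series converges in $L^2(\Omega)$, which legitimizes this termwise projection.

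Then the assumption $\gamma\notin \cup_{k\in\mathbb Z,\,n\in\mathbb N\setminus\{0\}}\{a\lambda_n+i(b\lambda_n-2k\pi/T)\}$ is exactly the hypothesis of Lemma \ref{lem-exp-nonzero}, which yields $c_n\neq 0$ for every $n$. Dividing the relation $\langle f,\varphi_n\rangle\, c_n=0$ by $c_n$ gives $\langle f,\varphi_n\rangle=0$ for all $n$, and by completeness of the orthonormal system this means $f=0$ in $L^2(\Omega)$, hence $f=0$ in $\Omega$. The argument contains no genuine obstacle once Lemma \ref{lem-exp-nonzero} is available; the only points requiring care are the justification of the termwise projection (convergence of the source series in $L^2(\Omega)$) and the observation that the $\gamma$-exclusion set of the corollary is precisely the one ensuring $c_n\neq 0$ simultaneously for all $n$.
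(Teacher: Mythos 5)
Your proposal is correct and follows essentially the same route as the paper: expand $u(\cdot,T)$ via the representation formula \eqref{sol-u} with $u_0=0$, project onto the orthonormal basis $\{\varphi_n\}$ to get $\langle f,\varphi_n\rangle\, c_n=0$, and invoke Lemma \ref{lem-exp-nonzero} to conclude $c_n\neq 0$ and hence $f=0$. Your added remarks on the $L^2$-convergence justifying termwise projection are a welcome bit of extra care that the paper leaves implicit.
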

\begin{proof}
From the representation formula \eqref{sol-u}, and noting  $u(x,T)=0$, $x\in\Omega$, it follows that
\begin{equation*}
\sum_{n=1}^\infty f_{n} \int_{0}^{T} \exp\left\{-(a+ib)\lambda_{n}(T-\tau)\right \} e^{-\gamma \tau}  \, \mathrm{d}  \tau \varphi_{n}(x)=0,\quad x\in\Omega,
\end{equation*}
which combined with the results in Lemma \ref{lem-exp-nonzero} implies that $f_n=0$ for $n \in \mathbb{N}^{+}$, i.e., $f(x)=0$ in $L^{2}(\Omega)$. Thus, we get the uniqueness of the inverse source problem from the terminal observation data.
\end{proof}
By slightly modifying of Lemma \ref{lem-exp-nonzero}, we can obtain the stability of the inverse source problem. The detailed steps are as follows. We first conclude from \eqref{sol-u} that
\begin{equation*}
u(x, T) = \sum_{n=1}^\infty f_{n} \int_{0}^{T} \exp\left\{-(a+ib)\lambda_{n}(T-\tau)\right \} g(\tau)  \, \mathrm{d}  \tau \varphi_{n}(x).
\end{equation*}
Now in view of Lemma \ref{lem-exp-nonzero}, by taking scalar product with $\varphi_n$ on both sides of the above equation, we can solve $f_n$ by
\begin{equation}\label{eq-fn}
f_{n} = \frac{\left\langle u(\cdot, T),\varphi_n \right\rangle }{ \int_{0}^{T} \exp\left\{-(a+ib)\lambda_{n}(T-\tau)\right \} g(\tau)  \, \mathrm{d} \tau}.
\end{equation}
Therefore, we get the representation of the function $f$ as follows:
\begin{equation*}
f=\sum_{n=1}^\infty f_{n} \varphi_n
= \sum_{n=1}^\infty \frac{\left\langle u(\cdot, T),\varphi_n \right\rangle \varphi_n }{ \int_{0}^{T} \exp\left\{-(a+ib)\lambda_{n}(T-\tau)\right \} g(\tau)  \, \mathrm{d} \tau}.
\end{equation*}
Taking the $L^2(\Omega)$-norm on both sides of the above equation, we conclude that
$$
\|f\|_{L^{2}(\Omega)}^2 = \sum_{n=1}^\infty \frac{\left|\langle u(\cdot, T),\varphi_n \rangle \right|^2}{\left|\int_{0}^{T} \exp\left\{-(a+ib)\lambda_{n}(T-\tau)\right \} g(\tau)  \, \mathrm{d} \tau \right|^2}.
$$
To give the upper bound of $\|f\|_{L^2(\Omega)}^2$, we need the following result.
\begin{lemma}\label{lem-lower}
Letting $g(t)=e^{-\gamma t}$ with $\gamma \not\in  \left\{a\lambda_n + i \left(b\lambda_n - \frac{2k\pi}T \right) \right\}$, $k\in\mathbb Z,n \in \mathbb N\setminus\{0\}$, then there exists a positive constant $C=C(T,\gamma,\Omega)$ such that
$$
\left|\int_{0}^{T} \exp\left\{-(a+ib)\lambda_{n}(T-\tau)\right \} g(\tau) \, \mathrm{d} \tau \right| \ge C\lambda_n^{-1}.
$$
\end{lemma}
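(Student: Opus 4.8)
The plan is to combine the explicit evaluation of the integral already carried out in the proof of Lemma~\ref{lem-exp-nonzero} with a simple asymptotic analysis as $n\to\infty$. Abbreviate
\[
I_n := \int_{0}^{T} \exp\left\{-(a+ib)\lambda_{n}(T-\tau)\right\} g(\tau)\,\mathrm{d}\tau, \qquad g(\tau)=e^{-\gamma\tau}.
\]
Since the claim $|I_n|\ge C\lambda_n^{-1}$ is equivalent to $\lambda_n|I_n|\ge C$, it suffices to bound $\lambda_n|I_n|$ below uniformly in $n$. The hypothesis on $\gamma$, taken with $k=0$, excludes $\gamma=(a+ib)\lambda_n$ for every $n$, so the same computation as in Lemma~\ref{lem-exp-nonzero} gives the closed form
\[
I_n = \frac{e^{-\gamma T}-e^{-(a+ib)\lambda_n T}}{(a+ib)\lambda_n-\gamma}, \qquad\text{hence}\qquad \lambda_n|I_n| = \frac{\lambda_n\,\bigl|e^{-\gamma T}-e^{-(a+ib)\lambda_n T}\bigr|}{\bigl|(a+ib)\lambda_n-\gamma\bigr|}.
\]
(Even if one admitted the resonant index for which $\gamma=(a+ib)\lambda_n$, the corresponding integral equals $Te^{-(a+ib)\lambda_n T}\neq0$, and it would simply join the finite collection handled below.)

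First I would analyze the large-$n$ behaviour. Because $a>0$ and $\lambda_n\to\infty$, one has $\bigl|e^{-(a+ib)\lambda_n T}\bigr|=e^{-a\lambda_n T}\to 0$, so the numerator converges to $\bigl|e^{-\gamma T}\bigr|=e^{-\mathrm{Re}(\gamma)T}>0$; dividing the denominator by $\lambda_n$ gives $\bigl|(a+ib)-\gamma/\lambda_n\bigr|\to|a+ib|=\sqrt{a^2+b^2}$. Therefore
\[
\lim_{n\to\infty}\lambda_n|I_n| = \frac{e^{-\mathrm{Re}(\gamma)T}}{\sqrt{a^2+b^2}} =: L > 0,
\]
and consequently there exist an index $N$ and a constant $C_1>0$ such that $\lambda_n|I_n|\ge C_1$ for all $n\ge N$.

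It remains to control the finitely many indices $1\le n<N$. By Lemma~\ref{lem-exp-nonzero} each $I_n\neq0$, so $\lambda_n|I_n|>0$ for every such $n$; putting $C_2:=\min_{1\le n<N}\lambda_n|I_n|>0$ and $C:=\min\{C_1,C_2\}>0$ yields $\lambda_n|I_n|\ge C$ for all $n\ge1$, which is the desired estimate with $C=C(a,b,T,\gamma,\Omega)$, the dependence on $\Omega$ entering through the spectrum $\{\lambda_n\}$. Equivalently, $\{\lambda_n|I_n|\}_{n\ge1}$ is a sequence of strictly positive reals converging to the strictly positive limit $L$, so its infimum is strictly positive and may be taken as $C$.

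The argument is essentially routine; the one point requiring genuine care is to check that the numerator does not degenerate in the limit, namely that $e^{-\gamma T}\neq0$ (automatic) while the competing term $e^{-(a+ib)\lambda_n T}$ genuinely decays, which is exactly where the dissipativity $a>0$ is used. The non-vanishing of the finitely many low-frequency terms is precisely the content of Lemma~\ref{lem-exp-nonzero}, so the main structural step reduces to the elementary observation that a sequence of positive numbers with a positive limit is bounded away from zero.
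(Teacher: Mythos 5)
Your proposal is correct and follows essentially the same route as the paper: both use the closed-form evaluation $I_n = \bigl(e^{-\gamma T}-e^{-(a+ib)\lambda_n T}\bigr)/\bigl((a+ib)\lambda_n-\gamma\bigr)$, bound the denominator by a constant times $\lambda_n$, and then invoke the non-vanishing from Lemma \ref{lem-exp-nonzero} together with the fact that a positive sequence converging to the positive limit $e^{-\mathrm{Re}(\gamma)T}$ (thanks to $a>0$) is bounded away from zero. The only cosmetic difference is that you apply this limiting argument to $\lambda_n|I_n|$ directly and split off the finitely many low indices, whereas the paper applies it to the numerator alone; the substance is identical.
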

\begin{proof}
In view of the assumption that $\gamma \not\in \cup_{k\in\mathbb Z,n \in \mathbb N\setminus\{0\}}\left\{a\lambda_n + i \left(b\lambda_n - \frac{2k\pi}T \right) \right\}$, by a direct calculation, we find
$$
I_n:=\int_{0}^{T} \exp\left\{-(a+ib)\lambda_{n}(T-\tau)\right \} e^{-\gamma \tau} \,  \mathrm{d}  \tau =  \frac{\exp\{ -\gamma T\} - \exp\left\{-(a+ib)\lambda_{n} T \right \} }{(a+ib)\lambda_n - \gamma},
$$
from which we further conclude from the inequalities 
$$
|(a+ib)\lambda_n - \gamma| \le \sqrt{ a^2+b^2 }\lambda_n + |\gamma| 
\le \lambda_n  \left(\sqrt{ a^2+b^2 } + \frac{|\gamma|}{\lambda_1} \right)
$$ 
that
\begin{align*}
I_n\ge  C\lambda_n^{-1} \left|\exp\{ -\gamma T\} - \exp\left\{-(a+ib)\lambda_{n} T \right \}\right|.
\end{align*}
Now from the assumption $\gamma\not\in \cup_{k\in\mathbb Z,n \in \mathbb N\setminus\{0\}} \left\{a\lambda_n + i \left(b\lambda_n - \frac{2k\pi}T \right) \right\}$, we see that
$$
\exp\{ -\gamma T\} - \exp\left\{-(a+ib)\lambda_{n} T \right \} \neq0,\quad \forall n=1,2,\cdots
$$
in view of Lemma \ref{lem-exp-nonzero}. Moreover, by noting that $a>0$ implies the sequence $\exp\{ -\gamma T\} - \exp\left\{-(a+ib)\lambda_{n} T \right\}$ admits limit $|\exp\{ -\gamma T\}|$, there exists a positive constant $C>0$ which is independent of $n$ such that the following estimate is valid
$$
\left| \exp\{ -\gamma T \} - \exp\left\{-(a+ib)\lambda_{n} T \right \} \right| \ge C>0,\quad \forall n=1,2,\cdots.
$$
Consequently, we obtain $I_n \ge  C\lambda_n^{-1}$. This completes the proof of the lemma.
\end{proof}
Now, from the lower bound in the above lemma, we can derive the stability of our inverse problem.
\begin{proof}[\bf Proof of Theorem \ref{thm-stabi-global}]
Using the Cauchy-Schwarz inequality, we have
$$
\begin{aligned}
\|f\|_{L^2(\Omega)}^2 = \sum_{n=1}^\infty \langle f,\varphi_n \rangle^2 
\le \left(\sum_{n=1}^\infty \lambda_n^{-2} |\langle f,\varphi_n \rangle|^{2} \right)^{\frac{1}{2}} \left(\sum_{n=1}^\infty \lambda_n^{2} |\langle f,\varphi_n \rangle|^{2} \right)^{\frac{1}{2}} ,
\end{aligned}
$$
which together with the assumption $\| f\|_{H^2(\Omega) \cap H_0^1(\Omega)} \le M$ further implies that
$$
\begin{aligned}
\|f\|_{L^2(\Omega)}^2
\le M \left(\sum_{n=1}^\infty  \lambda_n^{-2} |\langle f,\varphi_n \rangle|^{2} \right)^{\frac{1}{2}}.
\end{aligned}
$$
Moreover, by noting the identity \eqref{eq-fn} and the lower bound in Lemma \ref{lem-lower}, we see that
$$
\begin{aligned}
\|f\|_{L^2(\Omega)}^2
\le& M \left(\sum_{n=1}^\infty \lambda_n^{-2} \left|\frac{\left\langle u(\cdot, T),\varphi_n \right\rangle }{ \int_{0}^{T} \exp\left\{-(a+ib)\lambda_{n}(T-\tau)\right \} e^{-\gamma\tau}  \, \mathrm{d} \tau} \right|^{2} \right)^{\frac{1}{2}}
\\
\le& C M \left(\sum_{n=1}^\infty   \left| \left\langle u(\cdot, T),\varphi_n \right\rangle  \right|^{2} \right)^{\frac{1}{2}}  
= C M \left\| u(\cdot, T)\right\|_{L^2(\Omega)}.
\end{aligned}
$$
We finally obtain $\|f\|_{L^2(\Omega)} \le CM^{\frac{1}{2}}\|u(\cdot,T)\|_{L^2(\Omega)}^{\frac{1}{2}}$, which completes the proof of the theorem.
\end{proof}

\section{Local data measurement}\label{sec-local}

In this section, we will consider the domain $\Omega$ with analytic boundary $\partial\Omega$. In the sequel, we always assume that $g(t)=e^{-\gamma t}$ with a constant $\gamma\in\mathbb C$ satisfying $\gamma \neq \lambda_n(a+ib),\ n \in \mathbb N\setminus\{0\}$.

\subsection{Uniqueness from local data at two instants}

We first show that the local measurement data at two fixed instants is sufficient to determine the source profile in \eqref{eq-gov} by noting the $x-$analyticity of the solution to the problem \eqref{eq-gov} with $gf=0$ in Lemma \ref{lem-forward}. 

\begin{proof}[\bf Proof of Theorem \ref{thm-unique-local}]
As is known, the problem under consideration is linear, so it remains to show that $u(\cdot,T_1)=u(\cdot,T_2)=0$ in $\Omega_0$ implies that $f=0$ under the assumption $u_0=0$. We assume $u$ is a solution to the problem \eqref{eq-gov} with $u_0=0$ and $f\in L^2(\Omega)$. By \eqref{sol-u}, we know that
\begin{equation}\label{eq-AB}
\begin{aligned}
u(x, t)=&e^{-\gamma t} \sum_{n=1}^\infty f_{n} \frac{\varphi_n(x)}{(a+ib)\lambda_n -\gamma} -\sum_{n=1}^\infty f_{n} \frac{ \varphi_n(x)}{(a+ib)\lambda_n -\gamma} \exp \left\{-(a+ib)\lambda_n  t \right\} \\
:=&e^{-\gamma t} B(x)-A(x, t).
\end{aligned}
\end{equation}
For any fixed $t>0, A(\cdot, t)$ is real analytic with respect to $x \in \Omega$. In fact, it is easy to verify that $A(x, t)$ solves
$$
\begin{cases}
A_{t}(x,t)- (a+ib)\Delta A(x,t) = 0, & (x,t) \in \Omega_T, \\
A(x, t)=0, & (x,t)\in\partial\Omega\times(0,T), \\
A(x, 0)=\sum_{n=1}^\infty f_{n} \frac{\varphi_n(x)}{(a+ib)\lambda_n -\gamma} , & x \in \Omega.
\end{cases}
$$
For any $f \in L^{2}(\Omega)$, the initial value $A(x, 0) \in H^{2}(\Omega)\cap H_0^1(\Omega)$, we see that the solution $A(\cdot, t)$ for any fixed $t>0$ is analytic in $\Omega$ by Lemma \ref{lem-forward}. 

For local measurement data given by \eqref{eq-ob}, it follows from \eqref{eq-AB} that
\begin{equation}\label{eq-Phi}
\begin{aligned}
&A\left(x, T_{2}\right) e^{-\gamma T_{1}}-A\left(x, T_{1}\right) e^{-\gamma T_{2}}\\
=&e^{-\gamma T_{2}} h_{1}(x)-e^{-\gamma T_{1}} h_{2}(x)
:=\Phi(x), \  x \in \Omega_0.
\end{aligned}
\end{equation}
Since $A\left(\cdot, T_{i}\right)$ for $i=1,2$ are analytic, $\Phi(x)$ is also analytic in $\Omega$, which enables us to extend it analytically to the whole domain $\Omega$. Consequently, \eqref{eq-Phi} becomes
$$
A\left(x, T_{2}\right) e^{-\gamma T_{1}}-A\left(x, T_{1}\right) e^{-\gamma T_{2}}=\Phi(x), \quad x \in\Omega.
$$
From \eqref{eq-Phi}, we know that $h_{1}(x)=h_{2}(x)=0$ in $L^{2}\left(\Omega_0\right)$ implies $\Phi(x)=0$ for $x\in\Omega_0$. 
Based on the series expression of $A(x, t)$, we finally get
$$
 \frac{f_{n}}{(a+ib)\lambda_n -\gamma}\left[e^{-\left(\lambda_n T_2 (a+ib) + \gamma T_{1}\right)}-e^{-\left(\lambda_n  T_{1} (a+ib) + \gamma T_{2}\right)}\right]=\Phi_{n}:=\langle\Phi, \varphi_n\rangle.
$$
Consequently $\Phi(x)=0$ in $L^{2}(\Omega)$, which means $\Phi_{n}=0$ for $n=1, 2, \ldots$. Then we have $f_{n}=0$ for $n \in \mathbb{N}^{+}$. That is, the solution of our inverse problem is unique in $L^{2}(\Omega)$. This completes the proof of the theorem.
\end{proof}

\subsection{Conditional stability} 
In this subsection, following the arguments used in Li and Yamamoto \cite{LiYa06}, we will rely on the uniqueness result in the last subsection to construct a special topological structure and prove that the inverse source problem is stable in this topology. For this, we will give an integral identity with the aid of an adjoint problem, which reflects a corresponding relation of variations of the unknown source functions with changes of the initial-boundary values and additional observations.
\begin{lemma}[Integral identity]\label{le4.2v5}
Let $u$ be the solution of \eqref{eq-gov} with $u_0=0$, and $v$ be the solution to the dual problem of \eqref{eq-gov}, i.e.
\begin{equation}\label{eq-dual}
\begin{cases}
 \partial_t v + (a+ib)\Delta v = 0 & \text{ in }\Omega_T,\\
v=0 & \text{ on }\partial \Omega\times(0,T), \\
v(x, T) = \varphi(x) & \text{ in }\Omega,
\end{cases}
\end{equation}
where $\varphi\in C_0^\infty (\Omega_0)$, then the following integral identity holds:
\begin{align*}
\int_{\Omega_T} g(t) v(x, t) f(x) \,\mathrm{d}t \mathrm{d}x = \int_{\Omega_0}  u(x, T) \varphi(x) \,\mathrm{d}x .
\end{align*}
\end{lemma}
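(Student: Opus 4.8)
The plan is to establish the identity by the classical duality pairing: multiply the governing equation for $u$ by $v$, integrate over $\Omega_T$ in the \emph{bilinear} (unconjugated) pairing, and transfer all derivatives onto $v$ through integration by parts, so that the adjoint equation annihilates the interior terms and only the boundary-in-time contribution survives. The choice of the bilinear pairing $\int u\,v$ rather than the sesquilinear $L^2$ inner product is the conceptual point worth flagging: it is precisely this choice that makes the coefficient in the dual problem \eqref{eq-dual} remain $(a+ib)$, and not its complex conjugate, so that the operator $\partial_t+(a+ib)\Delta$ reappears after the transfer of derivatives and kills the volume term.

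First I would confirm that $v$ carries enough regularity for every integration by parts to be licit. Under the time reversal $s=T-t$, setting $w(x,s)=v(x,T-s)$ turns \eqref{eq-dual} into the homogeneous forward problem $w_s-(a+ib)\Delta w=0$ with initial datum $w(\cdot,0)=\varphi\in C_0^\infty(\Omega_0)\subset H^2(\Omega)\cap H_0^1(\Omega)$; Lemma \ref{lem-forward} then yields $v\in H^1(0,T;L^2(\Omega))\cap L^2(0,T;H^2(\Omega)\cap H_0^1(\Omega))$. Together with the assumed regularity of $u$, this renders both the temporal and spatial integrations by parts below rigorous.

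Next, starting from $u_t-(a+ib)\Delta u=g(t)f(x)$, I would pair with $v$ and integrate to get
\[
\int_{\Omega_T} g(t)\,f(x)\,v(x,t)\,\mathrm dx\,\mathrm dt=\int_{\Omega_T} u_t\,v\,\mathrm dx\,\mathrm dt-(a+ib)\int_{\Omega_T}(\Delta u)\,v\,\mathrm dx\,\mathrm dt.
\]
For the first term on the right, integration by parts in $t$ gives $\int_\Omega[u\,v]_{t=0}^{t=T}\,\mathrm dx-\int_{\Omega_T} u\,v_t$, and the boundary-in-time contribution collapses to $\int_\Omega u(x,T)\varphi(x)\,\mathrm dx$ because $u(\cdot,0)=0$ and $v(\cdot,T)=\varphi$. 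For the second term, Green's identity applied twice moves the Laplacian onto $v$, that is $\int_\Omega(\Delta u)\,v=\int_\Omega u\,(\Delta v)$, since both $u$ and $v$ vanish on $\partial\Omega$ so the boundary integrals $\int_{\partial\Omega}(v\,\partial_\nu u-u\,\partial_\nu v)\,\mathrm dS$ drop out.

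Finally I would recombine the two transferred terms into $-\int_{\Omega_T} u\,\bigl(v_t+(a+ib)\Delta v\bigr)$, which vanishes identically by the adjoint equation in \eqref{eq-dual}. What survives is exactly $\int_\Omega u(x,T)\varphi(x)\,\mathrm dx$, and since $\varphi$ is supported in $\Omega_0$ this coincides with $\int_{\Omega_0} u(x,T)\varphi(x)\,\mathrm dx$, which is the asserted identity of Lemma \ref{le4.2v5}. The only genuine care needed — the ``main obstacle,'' such as it is — lies in justifying the two integrations by parts at the stated Sobolev regularity (absolute continuity in $t$ of $t\mapsto\langle u(\cdot,t),v(\cdot,t)\rangle$ for the temporal step, and the validity of Green's formula for $H^2$ functions with vanishing trace for the spatial step), and in maintaining the bilinear pairing consistently throughout so that no spurious complex conjugates are introduced.
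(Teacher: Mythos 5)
Your proposal is correct and follows essentially the same route as the paper's own proof: pair the equation for $u$ with $v$ in the (unconjugated) bilinear pairing, integrate by parts in $t$ and apply Green's identity in $x$, and let the dual equation $\partial_t v+(a+ib)\Delta v=0$ annihilate the volume term, leaving only $\int_\Omega u(x,T)\varphi(x)\,\mathrm{d}x$, which localizes to $\Omega_0$ by the support of $\varphi$. The paper's proof is terser; your additions (the time-reversal argument giving $v$ the regularity of Lemma \ref{lem-forward}, and the explicit remark that the bilinear pairing is what keeps the dual coefficient equal to $(a+ib)$ rather than its conjugate) are sound justifications of steps the paper leaves implicit.
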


\begin{proof}

By integral by parts, and noting $u(x, 0) = 0$ and $v(x,T)=\varphi(x)$, we have
\begin{align*}
&\int_{\Omega_T} g(t) f(x) v(x, t) \,\mathrm{d}t \mathrm{d}x 
= \int_{\Omega_T} (\partial_t u - (a+ib)\Delta u ) v(x, t) \,\mathrm{d}t \mathrm{d}x  \\
=& - \int_{\Omega_T} u ( \partial_t  v +(a+ib) \Delta v) \,\mathrm{d}t \mathrm{d}x + \int_\Omega  u(x, T) \varphi( x) \,\mathrm{d}x,
\end{align*}
which together with the equation \eqref{eq-dual} of $v$ implies 
\begin{align*}
\int_\Omega \int_0^T g(t)  f(x) v(x, t) \,\mathrm{d}t \mathrm{d}x = \int_{\Omega_0}  u(x, T) \varphi(x) \,\mathrm{d}x.
\end{align*}
This completes the proof of the lemma.
\end{proof}

\begin{lemma}\label{lem-dualnorm}
We define the functional in the following
\begin{align}\label{eq10v6}
\|f\|_{ \mathcal{B}} : = \sup\limits_{ \substack{ 
  \varphi_j \in C_0^\infty ( \Omega_0), \, j=1,2\\
  \|\varphi_j \|_{L^2( \Omega_0)}  = 1
  }  }
\sum_{j=1}^2 \bigg| \int_\Omega \int_0^{T_j} g(t) v_j (x, t) f (x) \,\mathrm{d}t  \mathrm{d}x  \bigg|.
\end{align}
Here $v_j$ is defined by Lemma \ref{le4.2v5} with $\varphi=\varphi_j$, $j=1.2$. Then the functional $\| \cdot \|_{\mathcal{B}}$ is a new norm of the space $L^2( \Omega)$.
\end{lemma}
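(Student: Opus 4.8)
The plan is to verify the four defining properties of a norm—finiteness, absolute homogeneity, the triangle inequality, and positive definiteness—for $\|\cdot\|_{\mathcal{B}}$ on $L^2(\Omega)$. Throughout I regard $f\mapsto u$ as the solution operator of \eqref{eq-gov} with $u_0=0$ and source $gf$; by Lemma \ref{lem-forward} this map is well defined and linear, and the integral identity of Lemma \ref{le4.2v5} lets me rewrite the quantity inside the supremum as
\[
\int_\Omega\int_0^{T_j} g(t) v_j(x,t) f(x)\,\mathrm{d}t\,\mathrm{d}x = \int_{\Omega_0} u(x,T_j)\varphi_j(x)\,\mathrm{d}x,
\]
so that $\|f\|_{\mathcal{B}}=\sup\sum_{j=1}^2\bigl|\int_{\Omega_0} u(\cdot,T_j)\varphi_j\,\mathrm{d}x\bigr|$, the supremum being taken over unit-norm $\varphi_j\in C_0^\infty(\Omega_0)$. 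This reformulation is the organizing identity for the whole argument, since it converts a statement about the dual functions $v_j$ into one about the traces $u(\cdot,T_j)$ on $\Omega_0$.

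For finiteness I would apply the Cauchy--Schwarz inequality to each term, obtaining $\bigl|\int_{\Omega_0} u(\cdot,T_j)\varphi_j\bigr|\le \|u(\cdot,T_j)\|_{L^2(\Omega_0)}\le\|u(\cdot,T_j)\|_{L^2(\Omega)}$, and then bound the right-hand side by the regularity estimate of Lemma \ref{lem-forward}; hence $\|f\|_{\mathcal{B}}\le\sum_{j=1}^2\|u(\cdot,T_j)\|_{L^2(\Omega_0)}<\infty$. Absolute homogeneity is immediate from linearity of the solution operator: replacing $f$ by $cf$ replaces $u$ by $cu$, so each integral is multiplied by $c$ and the factor $|c|$ pulls out of the supremum. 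The triangle inequality follows in the same vein, using linearity of each integral in $f$, the triangle inequality for the modulus applied term by term, and the subadditivity of the supremum; all of these are routine.

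The substantive point is positive definiteness, and this is where I expect the real work to sit. That $f=0$ gives $\|f\|_{\mathcal{B}}=0$ is clear, since then $u\equiv 0$. For the converse, suppose $\|f\|_{\mathcal{B}}=0$. Because the supremum of a sum of two nonnegative terms vanishes, every admissible pair $(\varphi_1,\varphi_2)$ makes each term vanish separately; fixing one index, keeping the other test function unit-normed, and letting the corresponding $\varphi_j$ range over all of $C_0^\infty(\Omega_0)$ (the unit-norm constraint being removable by rescaling, since the integral is linear in $\varphi_j$) forces $\int_{\Omega_0} u(\cdot,T_j)\varphi_j\,\mathrm{d}x=0$ for all such test functions. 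By the density of $C_0^\infty(\Omega_0)$ in $L^2(\Omega_0)$ this yields $u(\cdot,T_1)=u(\cdot,T_2)=0$ in $\Omega_0$. At this stage I would invoke the uniqueness result of Theorem \ref{thm-unique-local}, whose proof rests on the $x$-analyticity from Lemma \ref{lem-forward} and analytic continuation from $\Omega_0$ to $\Omega$, to conclude $f=0$ in $\Omega$. Thus the only genuine obstacle is deferred entirely to the already-established Theorem \ref{thm-unique-local}; once that uniqueness is granted, the remaining norm axioms close up by linearity and elementary properties of the supremum.
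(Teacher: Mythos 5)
Your proposal is correct and follows essentially the same route as the paper: homogeneity and the triangle inequality by linearity, and positive definiteness by using the integral identity of Lemma \ref{le4.2v5} to convert $\|f\|_{\mathcal{B}}=0$ into $u(\cdot,T_1)=u(\cdot,T_2)=0$ in $\Omega_0$ (via density of $C_0^\infty(\Omega_0)$ in $L^2(\Omega_0)$), and then invoking the uniqueness of Theorem \ref{thm-unique-local}. Your additional finiteness check via Cauchy--Schwarz and the regularity estimate of Lemma \ref{lem-forward} is a welcome detail the paper leaves implicit, but it does not change the argument's structure.
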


\begin{proof}
It is not difficult to check the equality
\begin{align*}
\|c f \|_{\mathcal{B}} = |c | \|f \|_{\mathcal{B}},
\end{align*}
and the triangle inequality
\begin{align*}
\|f_1 + f_2 \|_{\mathcal{B}} \le \|f_1 \|_{\mathcal{B}} + \|f_2 \|_{\mathcal{B}}.
\end{align*}
It remains to prove $f=0$ if $\|f\|_{\mathcal B}=0$. For this,  it is easily seen that
\begin{align*}
 \int_\Omega \int_0^{T_1} g \varphi_1 f \,\mathrm{d}t \mathrm{d}x = \int_\Omega \int_0^{T_2} g \varphi_2 f \,\mathrm{d}t \mathrm{d}x = 0, 
\end{align*}
since $\|f\|_{\mathcal{B}} = 0 $. Now by Lemma \ref{le4.2v5}, we can obtain 
$$
\int_{\Omega_0}  u(x, T_j) \varphi(x) \,\mathrm{d}x  = 0,\quad j=1,2,
$$
which implies that $u(x,T_1)=u(x,T_2)=0$. Finally, from the uniqueness result in Theorem \ref{thm-unique-local}, we get $f \equiv 0$. This completes the proof of the lemma.
\end{proof}
Now we are ready to give a proof of Theorem \ref{thm-stabi-local}. 
\begin{proof}[Proof of Theorem \ref{thm-stabi-local}]
By Lemma \ref{le4.2v5} and Lemma \ref{lem-dualnorm}, we have
\begin{equation*}
\begin{aligned}
\|{f_1-f_2}\|_{\mathcal{B}} &= \sup\limits_{ \substack{  
  \varphi_j \in C_0^\infty ( \Omega_0), \, j=1,2\\
  \|\varphi_j \|_{L^2(\Omega_0)}  = 1
  }  }
\sum_{j=1}^2 \bigg| \int_\Omega \int_0^{T_j} g(t) v_j (x, t) (f_1(x)-f_2(x)) \,\mathrm{d}t  \mathrm{d}x  \bigg|\\
&=\sup\limits_{ \substack{  
  \varphi_j \in C_0^\infty ( \Omega_0),\, j=1,2\\
  \|\varphi_j \|_{L^2( \Omega_0)}  = 1}  }
\sum_{j=1}^2 \bigg| \int_{\Omega_0}  (u_1(x,T_j) - u_2(x,T_j))\varphi_j (x, t)  \, \mathrm{d}x  \bigg|.
\end{aligned}
\end{equation*}
Here $u_k(x,t)$ are the solutions to \eqref{eq-gov} with respect to $f_k$, $k=1,2$. On the basis of the above calculation, we finally see that
\begin{equation*}
\begin{aligned}
\|{f_1-f_2}\|_{\mathcal{B}} 
\le \|u_1(\cdot,T_1) - u_2(\cdot,T_1)\|_{L^2(\Omega_0)} + \|u_1(\cdot,T_2) - u_2(\cdot,T_2)\|_{L^2(\Omega_0)}.
\end{aligned}
\end{equation*}
The proof of the theorem is complete.
\end{proof}

\section{The framework of the PINNs}\label{sec-num}
In this section, based on the physics-informed neural networks (PINNs) framework, we study the final observation to recover the unknown source $f(x)$ and the global distribution $u(x,t)$ by two deep neural networks. Since the solution $u(x,t)$ to \eqref{eq-gov} is complex valued. Therefore, we design two neural networks: one is a neural network with complex-valued input and output layers, the other is a neural network with real-valued input and output layers, and get $u_{NN}$ and $f_\eta$ as the neural network approximation of $u$ and $f$. The details are in the following subsections.

\subsection{The C-PINNs method}

For $u_{NN}$, we need to deal with its real and imaginary parts at the same time when applying the deep neural network. But the classical PINNs are quite complex when dealing with this kind of problem with complex value. To overcome this difficulty, we propose physics-informed neural networks based on the complex-valued structure, which is called Complex-PINNs (C-PINNs). This method replaces the standard layers by the complex one, which greatly enhance the ability of the classical PINNs in dealing with the complex-valued data. In particular, when we put the input data $z  = z_1 + i z_2$ in the neural networks, the weight $W$ and the bias $d$ of the input data are generated in the complex-valued layers, where $W$ and $d$ are complex numbers. The $k-$th complex linear layer $s_k( \cdot )$ is defined by
\begin{equation}
  \begin{cases}
s_k(z)=s_{k_1}(z)+ i s_{k_2}(z), \\
s_{k_1}(z) := z_1 W_{k_1} - z_2 W_{k_2} + d_{k_1},  \\
s_{k_2}(z) := z_1 W_{k_2} + z_2 W_{k_1} + d_{k_2},
\end{cases}
\nonumber
\end{equation}
where \(a_{k_1}\) and \(a_{k_2}\) denote the real and imaginary parts of the weight of the $k-$th linear layer, \(b_{k_1}\) and \(b_{k_2}\) represent the of real and imaginary parts of the bias of the $k-$th linear layer. Before moving in the $(k+1)-$th linear layer, we need to apply an activation function \( \sigma(\cdot) \) on \( s_{k}(z) \), and the activation function is defined to be
\begin{equation}
l_{k}(z) =\sigma( s_{k}(z) ).
\nonumber
\end{equation}
Thus, for any positive integer \(K\in \mathbb{N}\), the neural network \( S_{NN}(z) \) with $K$ layers can be defined as follows
\begin{equation}
S_{NN}(z)=W_{K}l_{{K-1}}\circ\cdots\circ l_{1}(z)+d_{K},
\nonumber
\end{equation}
where $a_k$ represents the weight of the input data, $b_k$ represents the bias of the input data, $a_k$ and $b_k$ are complex numbers.

In this paper, we will provide the complexification of the input data $z$ before inputting it into the neural network because the input data $z$ is a real number.
The complexification of $z$ is defined by
\begin{equation}
\tilde{z}=z+ i {\varepsilon }, \nonumber
\end{equation}
where the value of $\varepsilon$ is taken to be $10^{-5}$. This kind of complexification can enhance the stability of data processing in the neural network. We will also use the real linear layer neural network to optimize $f_\eta$. 

\subsection{Loss function and error estimate}
In this subsection, we will complete the PINNs to establish the error estimates for the inverse problem for the complex Ginzburg-Landau equation by introducing a new loss function. The details are as follows:
\begin{itemize}
    \item Interior PDE residual
\begin{equation}
\mathcal{R}_{int,NN,\eta}(x,t) := \partial_tu_{NN}(x,t)-(a+ib)\Delta{u_{NN}(x,t)}-f_\eta(x)g(t), 
\quad (x,t)\in\Omega_T.
\nonumber
\end{equation}
\item Spatial boundary residual
\begin{equation}
\mathcal{R}_{sb,NN}(x,t):=u_{NN} (x,t),\quad(x,t)\in\partial\Omega\times(0,T).
\nonumber
\end{equation}
\item Temporal boundary (initial status) residual
\begin{equation}
\mathcal{R}_{tb,NN}(x):=u_{NN}(x,0),\quad x\in\Omega.
\nonumber
\end{equation}
\item Observational data residual
\begin{equation}
\mathcal{R}_{T_j,NN}(x):=u_{NN}(x,T_j)-h_j^{\delta}(x),\quad j=1,2,\ x\in\Omega_0.
\nonumber
\end{equation}
\end{itemize}
In the above expression, $\Omega_T := \Omega \times (0,T]$, $T_j$, $j=1,2$ represent the observation instants of the solution $u$, and
\begin{equation}
\|h_j^{\delta} - h_j\|_{L^2(\Omega)} \le \delta\nonumber
\end{equation}
represents the observation data with noise of level $\delta>0$. 

We will minimize these residuals comprehensively with some weights balancing different residuals to get better results for the complex Ginzburg-Landau equation. The following are the corresponding loss functions used in this paper:
\begin{equation}\label{eq-lose}
 \begin{aligned}
      \mathcal{L}_{\lambda,\beta}(NN,\eta)= 
      & \left\|\mathcal{R}_{T_1,NN} \right\|_{L^{2}(\Omega)}^{2} + \left\|\mathcal{R}_{T_2,NN} \right\|_{L^{2}(\Omega)}^{2}
      + \lambda \left\|\mathcal{R}_{int,NN,\eta} \right\|_{L^{2}(\Omega_{T})}^{2}
      \\
      &+\|\mathcal{R}_{tb,NN}\|_{H^{2}(\Omega)}^{2}+\beta \|\mathcal{R}_{sb,NN}\|_{H^1(0,T;H^\frac{3}{2}(\partial\Omega)) }^{2},
  \end{aligned}
\end{equation}
where $\lambda$ and $\beta $ are hyperparameters.

We next define the generalized error as follows:
\begin{equation*}
    \begin{cases}
    \mathcal{E}_{G,f} := 
    \left\|f_{\eta}^{*}-f \right\|_{L^2(\Omega)},\\
    \mathcal{E}_{G,u} := 
    \left\|u_{NN}^{*}-u \right\|_{C\left([0,T];L^2(\Omega)\right)},
    \end{cases}
\end{equation*}
where $u_{NN}^*$  and $f_{\eta}^*$ are the optimal solutions obtained by minimizing the loss function \eqref{eq-lose},  approximating the exact solution $(u, f )$ of \eqref{eq-gov}.

\begin{theorem}\label{thm-pinns-glocal}
In the case of $T_1=T_2=T$, for the approximate solution $ \left(u_{NN}^*, f_{\eta}^* \right)$ of the inverse problem \eqref{eq-gov}, we have the following error estimates:
\begin{equation*}
            \mathcal{E}_{G,f}\leq C\left(\mathcal{E}_{T}^*(NN,\eta)
            +\mathcal{E}_{int}^*(NN,\eta)
            +\mathcal{E}_{sb}^*(NN,\eta)
            +\mathcal{E}_{tb}^*(NN,\eta)
            +\delta\right)^{\frac{1}{2}},
    \end{equation*}
\begin{equation*}
            \mathcal{E}_{G,u}
            \leq C\left(\mathcal{E}_{T}^*(NN,\eta)
            +\mathcal{E}_{int}^*(NN,\eta)
            +\mathcal{E}_{sb}^*(NN,\eta)
            +\mathcal{E}_{tb}^*(NN,\eta)
            +\delta\right)^{\frac{1}{2}}.
    \end{equation*}
Here,
\begin{equation*}
\begin{aligned}
\mathcal{E}_{T}^*(NN,\eta):= \left\|\mathcal{R}^*_{{T,NN}} \right\|_{L^2(\Omega)}, 
\quad\mathcal{E}_{int}^*(NN,\eta):= \left\|\mathcal{R}^*_{int,NN,\eta} \right\|_{L^2(\Omega_T)},\\
\mathcal{E}_{sb}^*(NN,\eta):= \left\|\mathcal{R}^*_{sb,NN} \right\|_{H^1 \left(0,T;H^\frac{3}{2}(\partial\Omega) \right)} ,
\quad\mathcal{E}_{tb}^*(NN,\eta):= \left\|\mathcal{R}^*_{tb,NN} \right\|_{H^2(\Omega)},
\end{aligned}
\end{equation*}
the constant $C$ is positive and may depend on $a,b,d,\gamma,T,M,\Omega$.  
\end{theorem}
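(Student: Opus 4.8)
The plan is to reduce the two error estimates to the conditional stability bound of Theorem \ref{thm-stabi-global} by introducing the error functions $\hat u := u_{NN}^* - u$ and $\hat f := f_\eta^* - f$. First I would identify the initial-boundary value problem satisfied by $\hat u$: subtracting the governing equation \eqref{eq-gov} from the defining relations of the optimal residuals, one finds that $\hat u$ solves
\begin{equation*}
\begin{cases}
\partial_t \hat u - (a+ib)\Delta \hat u = \hat f\, g + \mathcal R_{int,NN,\eta}^*, & \text{in } \Omega_T,\\
\hat u = \mathcal R_{sb,NN}^*, & \text{on } \partial\Omega\times(0,T),\\
\hat u(\cdot,0) = \mathcal R_{tb,NN}^*, & \text{in } \Omega,
\end{cases}
\end{equation*}
where I have used $u_0=0$ in the present case $T_1=T_2=T$. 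Evaluating at $t=T$ and using $u(\cdot,T)=h$ together with $\mathcal R_{T,NN}^* = u_{NN}^*(\cdot,T)-h^\delta$ and $\|h^\delta-h\|_{L^2(\Omega)}\le\delta$, I obtain the key starting inequality $\|\hat u(\cdot,T)\|_{L^2(\Omega)} \le \mathcal E_T^* + \delta$.

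Next I would split off the residual-driven part by superposition, writing $\hat u = \hat u_1 + \hat u_2$, where $\hat u_1$ solves the homogeneous-boundary, zero-initial problem with source $\hat f\, g$ (so that $(\hat u_1,\hat f)$ is exactly an admissible pair for Theorem \ref{thm-stabi-global}), and $\hat u_2$ collects the three residual data $\mathcal R_{int,NN,\eta}^*$, $\mathcal R_{sb,NN}^*$ and $\mathcal R_{tb,NN}^*$. Applying Lemma \ref{lem-forward} and Corollary \ref{coro-forward} together with the embedding $H^1(0,T;L^2(\Omega))\hookrightarrow C([0,T];L^2(\Omega))$, the function $\hat u_2$ is controlled as
\begin{equation*}
\|\hat u_2\|_{C([0,T];L^2(\Omega))} \le C\bigl(\mathcal E_{int}^* + \mathcal E_{sb}^* + \mathcal E_{tb}^*\bigr),
\end{equation*}
since the loss \eqref{eq-lose} measures the residuals in precisely the norms $L^2(\Omega_T)$, $H^1(0,T;H^{3/2}(\partial\Omega))$ and $H^2(\Omega)$ required by those estimates. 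In particular $\|\hat u_1(\cdot,T)\|_{L^2(\Omega)} \le \mathcal E_T^* + \delta + C(\mathcal E_{int}^* + \mathcal E_{sb}^* + \mathcal E_{tb}^*)$ by the triangle inequality.

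I would then invoke Theorem \ref{thm-stabi-global} for the admissible pair $(\hat u_1,\hat f)$. Under the a priori bound $\|\hat f\|_{H^2(\Omega)\cap H_0^1(\Omega)}\le CM$, which holds provided the trained network $f_\eta^*$ stays bounded in $H^2\cap H_0^1$, the conditional stability estimate yields
\begin{equation*}
\mathcal E_{G,f} = \|\hat f\|_{L^2(\Omega)} \le CM^{\frac12}\,\|\hat u_1(\cdot,T)\|_{L^2(\Omega)}^{\frac12} \le C\bigl(\mathcal E_T^* + \mathcal E_{int}^* + \mathcal E_{sb}^* + \mathcal E_{tb}^* + \delta\bigr)^{\frac12},
\end{equation*}
which is the asserted bound for $f$ and is the source of the square-root exponent. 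For $\mathcal E_{G,u}$ I would estimate $\hat u_1$ by the forward bound of Lemma \ref{lem-forward}, namely $\|\hat u_1\|_{C([0,T];L^2(\Omega))}\le C\|\hat f\, g\|_{L^2(\Omega_T)}\le C\|\hat f\|_{L^2(\Omega)}$ (using that $g=e^{-\gamma t}$ is bounded on $[0,T]$), and combine it with the bound on $\hat u_2$ via $\hat u=\hat u_1+\hat u_2$.

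The main obstacle is twofold. The square-root rate is delivered entirely by the conditional stability of Theorem \ref{thm-stabi-global}, and applying it legitimately requires the a priori $H^2\cap H_0^1$ control on the difference $\hat f$; this is the crucial structural hypothesis and must be stated explicitly as boundedness of $f_\eta^*$. The remaining subtlety is that the estimate for $\hat u_2$, and hence its contribution to $\mathcal E_{G,u}$, is \emph{linear} in the residuals rather than of square-root type; to present both errors under a single square root I would use that the total residual-plus-noise quantity $R$ is bounded, so that $R\le C R^{1/2}$, thereby absorbing the linear terms into the square-root term and producing the stated form for both $\mathcal E_{G,f}$ and $\mathcal E_{G,u}$.
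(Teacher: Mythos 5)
Your proposal is correct and follows essentially the same route as the paper: the same error functions $\hat u = u_{NN}^* - u$, $\hat f = f_\eta^* - f$, the same superposition $\hat u = \hat u_1 + \hat u_2$, the same application of Theorem \ref{thm-pinns-glocal}'s underlying stability result (Theorem \ref{thm-stabi-global}) to the pair $\left(\hat u_1, \hat f\right)$, the same control of $\hat u_2$ via Lemma \ref{lem-forward} and Corollary \ref{coro-forward}, and the same boundedness trick to absorb the linear residual terms into the square root for the $\mathcal{E}_{G,u}$ bound. Your explicit flagging of the a priori $H^2(\Omega)\cap H_0^1(\Omega)$ bound on $\hat f$ as a structural hypothesis is a point the paper uses but leaves implicit, so that remark strengthens rather than departs from the paper's argument.
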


\begin{proof}
For the sake of convenience, we consider the case where the initial value $u_0$ is vanished in \eqref{eq-gov}, that is,
\begin{equation*}
\begin{cases}
u_t -(a+ib)\Delta u = g(t)f(x), & (x,t) \in \Omega_T, \\u(x, t) = 0, & (x,t) \in \partial\Omega\times(0,T], \, \\u(x, 0) = 0, & x \in \Omega.
\end{cases}
\end{equation*}
To indicate the dependence of the solution $u$ on $f$, we sometimes denote $u$ by $u[f]$. Letting $\hat{u} := u_{NN}^* -u[f]$, and noting the function $u_{NN}^*$ satisfies
\begin{equation*}
\mathcal{R}_{int,NN,\eta}^*(x,t) := \partial_tu_{NN}^*(x,t)-(a+ib)\Delta{u_{NN}^*(x,t)}-f_\eta^*(x)g(t),\quad (x,t)\in\Omega_T,
\end{equation*}
we can see that $\hat{u}$ satisfies the following initial-boundary value problem:
\begin{equation}\label{eq-govlose}
        \begin{cases}
        \partial_t\hat{u}-(a+ib)\Delta\hat{u} = \mathcal{R}_{int,NN,\eta}^*(x,t) + g(t) \left(f_\eta^*-f \right)(x), &(x,t)\in\Omega_T,\\
        \hat{u}(x,t)=\mathcal{R}_{sb,NN}^*(x,t),&(x,t) \in \partial\Omega\times(0,T],\\
        \hat{u}(x,0)=\mathcal{R}_{tb,NN}^*(x), &x\in\Omega,
        \end{cases}
    \end{equation}
with the following terminal conditions
\begin{equation*}
        \begin{aligned}
        \hat{u}(x, T_j) = u_{NN}^* (x, T_j) - u[ f ](x, T_j) =\mathcal{R}_{T_j,NN}^*(x) +h_j^\delta(x)- h_j(x),\quad j=1,2 .
        \end{aligned}
    \end{equation*}
Now by the superposition principle of 
\eqref{eq-govlose}, we make the decomposition $\hat{u} := \hat{u}_1 + \hat{u}_2$, where  $\hat{u}_1 $ is such that
\begin{equation}\label{eq-govlose-1}
        \begin{cases}
        \partial_t\hat{u}_1-(a+ib)\Delta\hat{u}_1= g(t) \left(f_\eta^*-f \right)(x),&(x,t)\in\Omega_T,\\
        \hat{u}_1(x,t)=0,&(x,t) \in \partial\Omega\times(0,T],\\\hat{u}_1(x,0)=0,&x\in\Omega,
        \end{cases}
    \end{equation}
with the conditions
\begin{equation}\label{eq-u_1}
\hat{u}_1(x,T_j)=\mathcal{R}_{T_j,NN}^*(x)-\left(h_j(x)-h_j^\delta(x) \right)-\hat{u}_2(x,T_j),
\end{equation}
and $\hat{u}_2$ satisfies the following equations
\begin{equation}\label{eq-govlose-2}
        \begin{cases}
        \partial_t\hat{u}_2-\Delta\hat{u}_2=\mathcal{R}_{int,NN,\eta}^*(x,t),&(x,t)\in\Omega_T,\\\hat{u}_2(x,t)=\mathcal{R}_{sb,NN}^*(x,t),&(x,t) \in \partial\Omega\times(0,T],\\\hat{u}_2(x,0)=\mathcal{R}_{tb,NN}^*(x),&x\in\Omega.
        \end{cases}
    \end{equation}
In the case of $T_1=T_2=T$, we apply Theorem \ref{thm-stabi-global} to \eqref{eq-govlose-1}-\eqref{eq-u_1} to yield 
\begin{equation*}
        \begin{aligned}
        &\left\|f_\eta^{*}-f \right\|_{L^2(\Omega)} 
        \leq CM^{\frac{1}{2}} \left\|\hat{u}_1(\cdot, T) \right\|_{L^2(\Omega)}^{\frac{1}{2}} \\
        =& CM^{\frac{1}{2}}\left( \left\|\mathcal{R}_{T,NN}^*- \left(h-h^\delta \right)-\hat{u}_2(\cdot,T) \right\|_{L^2(\Omega)} \right)^{\frac{1}{2}}\\
        \leq & CM^{\frac{1}{2}}\left(
        \left\|\mathcal{R}_{T,NN}^* \right\|_{L^2(\Omega)} +\delta+ \left\|\hat{u}_2(\cdot,T) \right\|_{L^2(\Omega)} \right)^{\frac{1}{2}},
        \end{aligned}
    \end{equation*}
this together with \eqref{eq-govlose-2}, Lemma \ref{lem-forward} and Corollary \ref{coro-forward} implies that
\begin{equation}\label{eq-f-ae-2}
\begin{aligned}
\left\|f_\eta^*-f \right\|_{L^2(\Omega)}
&\leq CM^{\frac{1}{2}} \Big( \left\|\mathcal{R}_{T,NN}^* \right\|_{L^2(\Omega)} +\delta+ \left\|\mathcal{R}_{int,NN,\eta}^* \right\|_{L^2(\Omega_T)}
\\
&\qquad+ \left\|\mathcal{R}_{sb,NN}^* \right\|_{H^1 \left(0,T;H^\frac{3}{2}(\partial\Omega) \right)}+ \left\|\mathcal{R}_{tb,NN}^* \right\|_{H^2(\Omega)} \Big)^{\frac{1}{2}}.
\end{aligned}
\end{equation}
Now we will give an error estimate for $\left\|\hat{u} \right\|_{H^1 \left(0,T;L^{2}(\Omega) \right)} = 
\left\|u_{NN}^{*}-u \right\|_{H^1\left([0,T];L^2(\Omega)
\right)}$.
According to the regularization theory of 
\eqref{eq-govlose}, see Lemma \ref{lem-forward} and Corollary \ref{coro-forward}, there holds
    \begin{equation*}
        \begin{aligned}
        & \left\|u_{NN}^{*}-u \right\|_{H^1\left(0,T;L^2(\Omega)\right)} 
       \\
        &
        \leq C \left( \left\|\mathcal{R}^{*}_{int,NN,\eta}+\left(f_\eta^*-f \right)g \right\|_{L^2(\Omega_T)}+
        \left\|\mathcal{R}^*_{sb,NN} \right\|_{H^1 \left(0,T;H^\frac{3}{2}(\partial\Omega) \right)}
        +  \left\|\mathcal{R}^*_{tb,NN} \right\|_{H^2(\Omega)} \right),
        \end{aligned}
    \end{equation*}
which combines with the estimate \eqref{eq-f-ae-2} implies that
\begin{equation*}
    \begin{aligned}
        &\left\|u_{NN}^{*}-u \right\|_{H^1\left(0,T;L^2(\Omega)\right)}\\ 
        \leq& C\left( \left\|\mathcal{R}^{*}_{int,NN,\eta} \right\|_{L^{2}(\Omega_{T})}
        + \left\|f_\eta^*-f \right\|_{L^{2}(\Omega)}
        + \left\|\mathcal{R}^*_{sb,NN} \right\|_{H^1 \left(0,T;H^\frac{3}{2}(\partial\Omega) \right)}
        + \left\|\mathcal{R}^*_{tb,NN} \right\|_{H^2(\Omega)}\right) \\
        \leq & C\Big( \left\|\mathcal{R}^{*}_{int,NN,\eta}
        \right\|_{L^{2}(\Omega_{T})}
        + \left\|\mathcal{R}_{sb,NN^*} \right\|_{H^1 \left(0,T;H^\frac{3}{2}(\partial\Omega) \right)}
        + \left\|\mathcal{R}^*_{tb,NN} \right\|_{H^2(\Omega)}\Big)
        \\
        & +\left( \left\|\mathcal{R}^*_{{T,NN}} \right\|_{L^2(\Omega)} +\delta
        + \left\|\mathcal{R}^*_{int,NN,\eta} \right\|_{L^2(\Omega_T)}
        + \left\|\mathcal{R}^*_{sb,NN} \right\|_{H^1 \left(0,T;H^\frac{3}{2}(\partial\Omega) \right)}
        + \left\|\mathcal{R}^*_{tb,NN} \right\|_{H^2(\Omega)} \right)^{\frac{1}{2}}.
    \end{aligned}
\end{equation*}
Noting that the inequality $\mathcal{R} \leq \mathcal{R}^\rho$  holds, where $ \rho\in(0,1)$ and the residual term satisfies $0\leq \mathcal{R}\leq 1$, we finally get the following inequality
    \begin{equation*}
        \begin{aligned}
        & \left\|u_{NN}^{*}-u \right\|_{H^1\left(0,T;L^2(\Omega)\right)}
        \\
        \leq  & 
        C \left( \left\|\mathcal{R}^*_{{T,NN}} \right\|_{L^2(\Omega)} +\delta
        + \left\|\mathcal{R}^*_{int,NN,\eta} \right\|_{L^2(\Omega_T)}+
        \left\|\mathcal{R}^*_{sb,NN} \right\|_{H^1 \left(0,T;H^\frac{3}{2}(\partial\Omega) \right)}
        + \left\|\mathcal{R}^*_{tb,NN} \right\|_{H^2(\Omega)} \right)^{\frac{1}{2}}.
        \end{aligned}
    \end{equation*}
We complete the proof of the theorem.
\end{proof}
In the case of $T_1<T_2$, by an argument similar to the above theorem with Theorem \ref{thm-stabi-global} being replaced by Theorem \ref{thm-stabi-local}, we have
\begin{theorem}\label{thm-pinns-local}
Assuming $T_1<T_2\le T$ and let $ \left(u_{NN}^*, f_{\eta}^* \right)$ be the approximate solution to \eqref{eq-gov} by the neural network, then the following error estimate 
\begin{equation*}
            \left\|f_\eta^* - f \right\|_{\mathcal B} \leq C\left(\mathcal{E}_{T}^*(NN,\eta)
            +\mathcal{E}_{int}^*(NN,\eta)
            +\mathcal{E}_{sb}^*(NN,\eta)
            +\mathcal{E}_{tb}^*(NN,\eta)
            +\delta\right),
    \end{equation*}
is valid, where 
\begin{equation*}
\begin{aligned}
\mathcal{E}_{T}^*(NN,\eta):= \sum_{j=1}^2 \left\|\mathcal{R}^*_{{T_j,NN}} \right\|_{L^2(\Omega_0)}, 
\quad\mathcal{E}_{int}^*(NN,\eta):=
\left\|\mathcal{R}^*_{int,NN,\eta} \right\|_{L^2(\Omega_T)},\\
\mathcal{E}_{sb}^*(NN,\eta):=
\left\|\mathcal{R}^*_{sb,NN} \right\|_{H^1 \left(0,T;H^\frac{3}{2}(\partial\Omega) \right)} ,
\quad\mathcal{E}_{tb}^*(NN,\eta):= 
\left\|\mathcal{R}^*_{tb,NN} \right\|_{H^2(\Omega)},
\end{aligned}
\end{equation*}
the constant $C$ is positive and may depend on $a,b,d,\gamma,T,\Omega$.  
\end{theorem}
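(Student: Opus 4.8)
The plan is to reuse the error-decomposition scheme of the proof of Theorem \ref{thm-pinns-glocal} essentially verbatim, changing only the stability input. First I would reduce to $u_0=0$ and set $\hat u := u_{NN}^* - u[f]$, so that $\hat u$ solves the perturbed problem \eqref{eq-govlose} and admits the same superposition splitting $\hat u = \hat u_1 + \hat u_2$ into the pieces \eqref{eq-govlose-1} and \eqref{eq-govlose-2}. The only structural change relative to the global case is that the two observation instants $T_1<T_2$ are now genuinely distinct and the observation region is $\Omega_0\subset\Omega$, so the terminal data must be read off on $\Omega_0$ at both times rather than on $\Omega$ at a single time.

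The crux is to replace the one-instant conditional bound of Theorem \ref{thm-stabi-global} by the two-instant estimate of Theorem \ref{thm-stabi-local}. Since the inverse problem is linear and $\hat u_1$ is precisely the solution of \eqref{eq-gov} driven by the source $f_\eta^*-f$ with homogeneous initial and boundary data, one has $\hat u_1 = u[f_\eta^*]-u[f]$, and I would apply Theorem \ref{thm-stabi-local} with $f_1=f_\eta^*$, $f_2=f$ to obtain
\begin{equation*}
\|f_\eta^* - f\|_{\mathcal B} \le \sum_{j=1}^2 \|\hat u_1(\cdot, T_j)\|_{L^2(\Omega_0)}.
\end{equation*}
This is exactly the point where the exponent changes: because Theorem \ref{thm-stabi-local} is an unconditional Lipschitz estimate (no $M^{1/2}$ factor and no square root), the final bound is linear in the residuals rather than of order one half, which is why the constant $C$ here depends only on $a,b,d,\gamma,T,\Omega$ and not on an a priori bound $M$.

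It then remains to feed the network residuals into the right-hand side. Using the terminal identity \eqref{eq-u_1}, the triangle inequality, and $\|h_j-h_j^\delta\|_{L^2(\Omega_0)}\le\|h_j-h_j^\delta\|_{L^2(\Omega)}\le\delta$, each term splits as
\begin{equation*}
\|\hat u_1(\cdot, T_j)\|_{L^2(\Omega_0)} \le \|\mathcal R_{T_j,NN}^*\|_{L^2(\Omega_0)} + \delta + \|\hat u_2(\cdot, T_j)\|_{L^2(\Omega)}.
\end{equation*}
The contribution of $\hat u_2$ is controlled by the forward regularity theory: since $\hat u_2$ solves \eqref{eq-govlose-2}, Lemma \ref{lem-forward} together with Corollary \ref{coro-forward} and the embedding $H^1(0,T;L^2(\Omega))\hookrightarrow C([0,T];L^2(\Omega))$ bound $\sup_t\|\hat u_2(\cdot,t)\|_{L^2(\Omega)}$ by a constant times $\|\mathcal R_{int,NN,\eta}^*\|_{L^2(\Omega_T)}+\|\mathcal R_{sb,NN}^*\|_{H^1(0,T;H^{3/2}(\partial\Omega))}+\|\mathcal R_{tb,NN}^*\|_{H^2(\Omega)}$. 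Summing over $j=1,2$ and recognizing the definitions of $\mathcal E_T^*,\mathcal E_{int}^*,\mathcal E_{sb}^*,\mathcal E_{tb}^*$ yields the claimed estimate.

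The main obstacle is essentially bookkeeping rather than analysis: I must keep the terminal residuals on $\Omega_0$ (since only local data enter Theorem \ref{thm-stabi-local}) while measuring the interior, boundary, and initial residuals in exactly the norms in which Corollary \ref{coro-forward} supplies a forward estimate (hence the $H^{3/2}(\partial\Omega)$ and $H^2(\Omega)$ norms), and I must verify carefully that $\hat u_1$ coincides with the solution difference to which Theorem \ref{thm-stabi-local} applies. Once these identifications are in place, the chain of inequalities above closes with no further difficulty, and, if desired, a companion estimate for $\|u_{NN}^*-u\|_{H^1(0,T;L^2(\Omega))}$ follows from the forward regularity of \eqref{eq-govlose} exactly as in the global case.
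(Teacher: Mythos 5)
Your proposal is correct and coincides with the paper's intended argument: the paper gives no separate proof of Theorem \ref{thm-pinns-local}, stating only that it follows ``by an argument similar to the above theorem with Theorem \ref{thm-stabi-global} being replaced by Theorem \ref{thm-stabi-local},'' and your write-up is precisely that substitution carried out — the same splitting $\hat u = \hat u_1 + \hat u_2$ via \eqref{eq-govlose-1}--\eqref{eq-govlose-2}, the identification $\hat u_1 = u[f_\eta^*]-u[f]$, and the control of $\hat u_2(\cdot,T_j)$ through Lemma \ref{lem-forward}, Corollary \ref{coro-forward} and the embedding $H^1(0,T;L^2(\Omega))\hookrightarrow C([0,T];L^2(\Omega))$. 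You also correctly identify why the bound becomes linear in the residuals and why the constant loses its dependence on $M$: Theorem \ref{thm-stabi-local} is a Lipschitz estimate in the norm $\|\cdot\|_{\mathcal B}$, unlike the H\"older-type conditional estimate of Theorem \ref{thm-stabi-global}.
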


\section{Numerical experiments}
\label{se6v25}
In this section, we will carry out numerical simulation of the neural network algorithm to verify its effectiveness for both one-dimensional and two-dimensional cases. To this end, we fix a neural network with $2$ hidden layers and $20$ neurons per layer to represent \(u_{NN}\) and \(f_\eta\). The activation function is \({\sigma}(x)=x\cdot{\tanh(x)}\) and the hyperparameters are \(\lambda=0.01\) and \(\beta=1\). The number of training epochs is \(3\times10^{5}\), the learning rate starts at $0.01$ and is halved every $3\times 10^4$ iterations. 
To evaluate the integrals in \eqref{eq-lose} numerically so that we can determine the parameters from the discrete training set, we introduce the following sets:
\begin{equation}
	\begin{aligned}
		&S_{d}:=\{(x_{n},T_j);x_{n}\in\Omega,\quad n=1,2,\cdots,N_{d}\}, \quad j=1,2,\\
		&\mathcal{S}_{int}:=\left\{ \left(\widetilde{x}_{n},\widetilde{t}_{n} \right); \left(\widetilde{x}_{n},\widetilde{t}_{n} \right)\in\Omega_{T},\quad n=1,2,\cdots,N_{int}\right\}, \\
		&\mathcal{S}_{tb}:=\left\{ \left(\overline{x}_{n},0 \right);\overline{x}_{n}\in\Omega,\quad n=1,2,\cdots,N_{tb}\right\}, \\
		&S_{sb}:=\left\{ \left(\widehat{x}_{n},\widehat{t}_{n} \right); \left(\widehat{x}_{n},\widehat{t}_{n} \right)\in\partial\Omega_{T},\quad n=1,2,\cdots,N_{sb}\right\}.
	\end{aligned}
	\nonumber
\end{equation}
Moreover, to evaluate the accuracy of our numerical results, we define the relative errors as follows:
\begin{equation*}
\begin{cases}
\text{Re}_u := \dfrac{\|u - u_{\text{exact}}\|_{L^2(\Omega_T)}}{\|u_{\text{exact}}\|_{L^2(\Omega_T)}}, \\[1em]
\text{Re}_f := \dfrac{\|f - f_{\text{exact}}\|_{L^2(\Omega)}}{\|f_{\text{exact}}\|_{L^2(\Omega)}}.
\end{cases}
\end{equation*}
Here, \(u\) and \(f\) represent the numerical solutions obtained from the algorithm, while \(u_{\text{exact}}\) and \(f_{\text{exact}}\) denote the corresponding exact solutions. The relative errors are computed using the \(L^2\) norms over the domains \(\Omega_T\) and \(\Omega\), respectively.

\subsection{One-dimensional example}
In this subsection, setting $\Omega=[0,\pi],a=1,b=1$ and $T=1$, we consider the one dimensional complex Ginzburg-Landau equation
\begin{equation*}
  \begin{cases}
u_{t}-(a+ib)\Delta u= e^{-t}f(x), & (x,t)\in\Omega_T,

\\ u(x,t)=0, & (x,t)\in\partial\Omega\times(0,T],

\\ u(x,0)=0, & x\in\Omega ,

\end{cases}
\end{equation*}
where $f(x)= \sin x$, and the exact solution $u$ of the above equation is
\[
u(x,t)=\frac{\sin x}{a+ib-\gamma }\left[e^{-t}-e^{-(a+ib)t} \right],\quad (x,t)\in\Omega_T.
\]
Let $h^{\delta}$ denote the observed data with noise $\delta>0$, defined as:
$$
h^{\delta}:=h(x)(1+\delta \cdot randn(0,1)),
$$
where $randn(0,1)$ represents a standard normal distribution.
\begin{example}{Global observation and experimental results.}
In the whole domain $\Omega=[0,\pi]$, taking $u(x,1),x\in\Omega$ to be the observation data for $u$, i.e.
$$
h^{\delta}(x)=u^{\delta}(x,1)=\frac{\sin(x)}{i} \left[e^{-1}-e^{-1-i} \right](1+\delta \cdot randn(0,1)).
$$
We consider four noise levels: $\delta_1 = 0$, $\delta_2 = 0.001$, $\delta_3 = 0.01$, and $\delta_4 = 0.1$, aiming to evaluate the adaptability and robustness of our proposed model as the observation accuracy changes.
\end{example}

As shown in Figure \ref{Fig.1D.Global.f} and Table \ref{Tab.1D.Global.f}, based on the data from Figure \ref{Fig.1D.Global.f} and Table \ref{Tab.1D.Global.f}, we observe that as the noise level $\delta$ increases, the relative error of the inverted source term $f$ also increases. However, this growth is still within an acceptable range. The experimental results indicate that at a certain fixed instant, the measurement data from the whole region are sufficient for accurately reconstructing the source function $f$ using the proposed method. 

\begin{figure}[!t]
    \centering
    \subfigure{
        \includegraphics[width=0.45\textwidth]{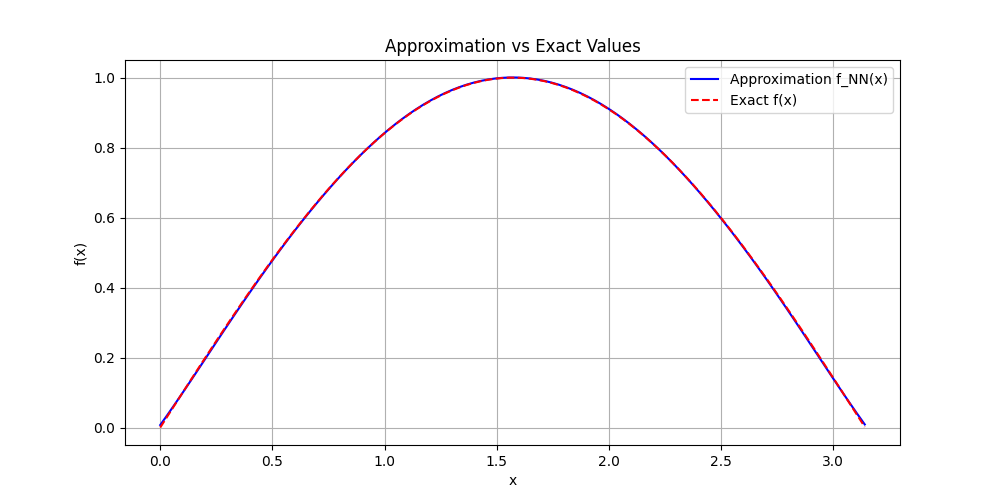}
    }
    \subfigure{
        \includegraphics[width=0.45\textwidth]{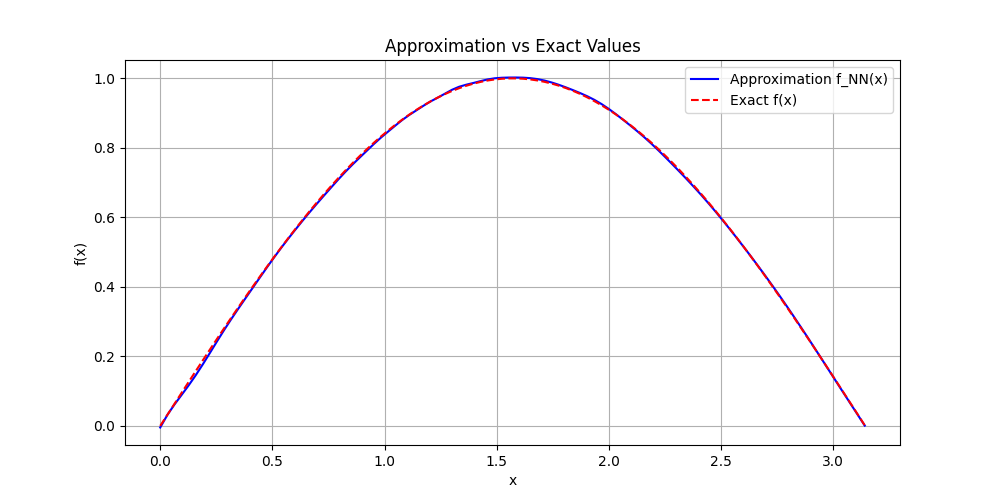}
    }
    \subfigure{
        \includegraphics[width=0.45\textwidth]{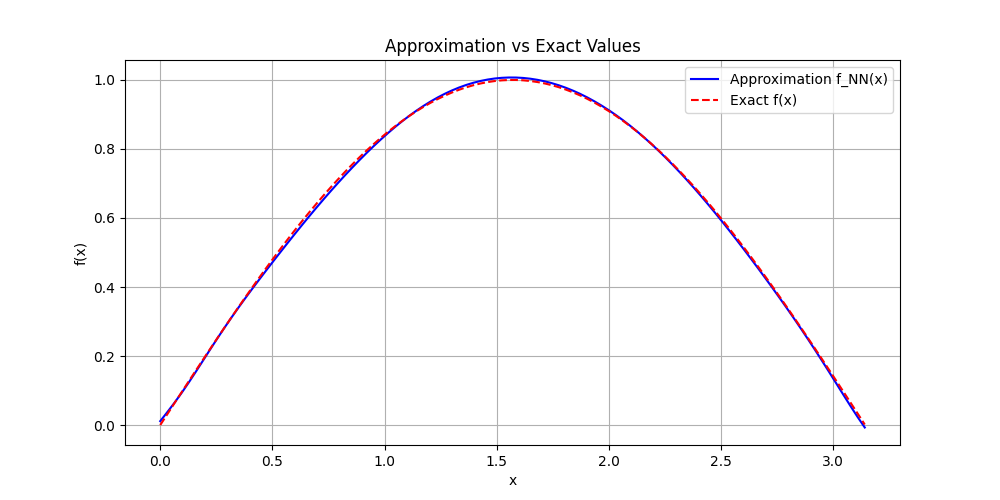}
    }
    \subfigure{
        \includegraphics[width=0.45\textwidth]{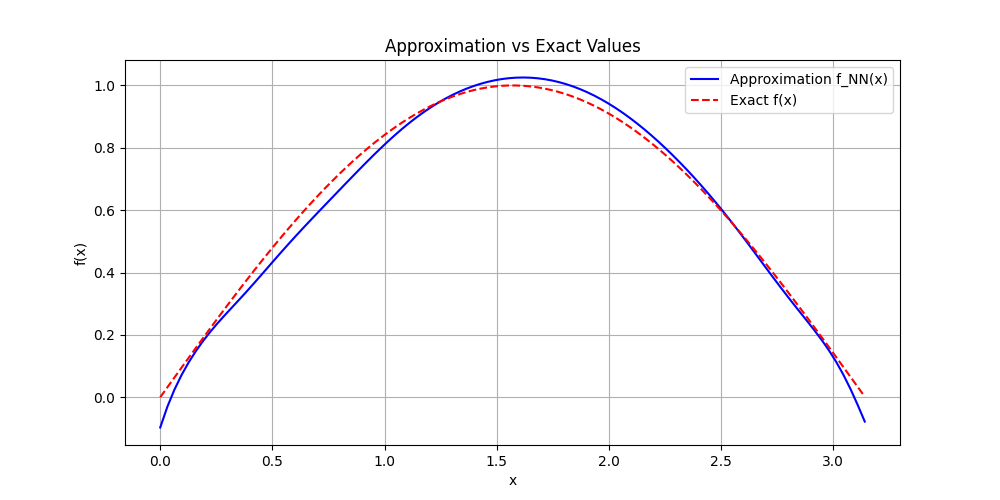}
    }
    \caption{The exact solution (red) and numerical approximation (blue) of the source term $f$ under different observation noise levels: top-left: $\delta_1=0$; top-right: $\delta_2=0.001$; bottom-left: $\delta_3=0.01$; bottom right: $\delta_4=0.1$.}
    \label{Fig.1D.Global.f}
\end{figure}

\begin{table}[!t]
    \centering
    \begin{tabular}{c|c}
        \hline
        \textbf{Noise Level} & $Re f$ \\
        \hline
        $\delta_1=0$     & 0.26916\% \\
        $\delta_2=0.001$ & 0.50045\% \\
        $\delta_3=0.01$  & 0.84765\% \\
        $\delta_4=0.1$   & 4.52661\% \\
        \hline
    \end{tabular}
    \caption{  The relative errors of the source term $f$ under different observation noise levels $\delta_1,\delta_2,\delta_3,\delta_4$.}
    \label{Tab.1D.Global.f}
\end{table}

\begin{example}{Local observation and experimental results.}

In the previous example, we conducted a numerical simulation for recovering the source term from global information; in this example, we investigate the numerical experiment of local observation data. We select the local observation region $\Omega_0\subset\Omega=[0,\pi]$ and choose $u=(x,0.3),u=(x,0.5),x\in\Omega_0$ as the observation regions for $u$. The noisy observations can be represented as:
$$
h^{\delta}_1(x)=u^{\delta}(x,0.3)=\frac{\sin(x)}{i}  \left[e^{-0.3}-e^{-0.3(1+i)} \right](1+\delta \cdot randn(0,1)),
$$
and 
$$
h^{\delta}_2(x)=u^{\delta}(x,0.5)=\frac{\sin(x)}{i} \left[e^{-0.5}-e^{-0.5(1+i)} \right](1+\delta \cdot randn(0,1)) ,
$$
 where the noise level $\delta=0.01$. We consider different observation regions $\Omega_0:$  $\Omega_{0}^{1}:=[0,0.5\pi],\Omega_{0}^{2}:=[0,0.1\pi],\Omega_{0}^{3}:=[0.3\pi,0.5\pi]$, and $\Omega_{0}^{4}:=[0,0.1\pi]\cup[0.9\pi,\pi]$.
    
\end{example}

The experimental details for local observations are consistent with those for global observations, except for the number of collocation points. In the local data measurement, we use $ N=N_{\mathrm{int}} + N_{\mathrm{sb}} + N_{\mathrm{tb}} + N_d = 256 + 256\times2 + 256 + 256\times2 = 1563$ as the number of collocation points, which are randomly sampled in four different domains, i.e., interior spatio-temporal domain, spatial and temporal boundary, additional measurement domain.

As shown in Figure \ref{Fig.1D.Local.f} and Table \ref{Tab.1D.Local.f}, analysis of the relative error data for the reconstructed source term $f$ in different local areas (Table \ref{Tab.1D.Local.f}) reveals that smaller observation domains correlate with larger relative errors in $f$. However, it is still within an acceptable range for us. We also found that when the size of the observation area is the same, and different specific observation ranges are selected, the relative errors of the inverted source term $f$ are similar, which shows that our inversion method has certain stability. Therefore, it can be known that in one-dimensional case, the local measurement data at two fixed instants is sufficient to accurately recover the source distribution $f$.

\begin{figure}[!t]
    \centering
    \subfigure{
        \includegraphics[width=0.45\textwidth]{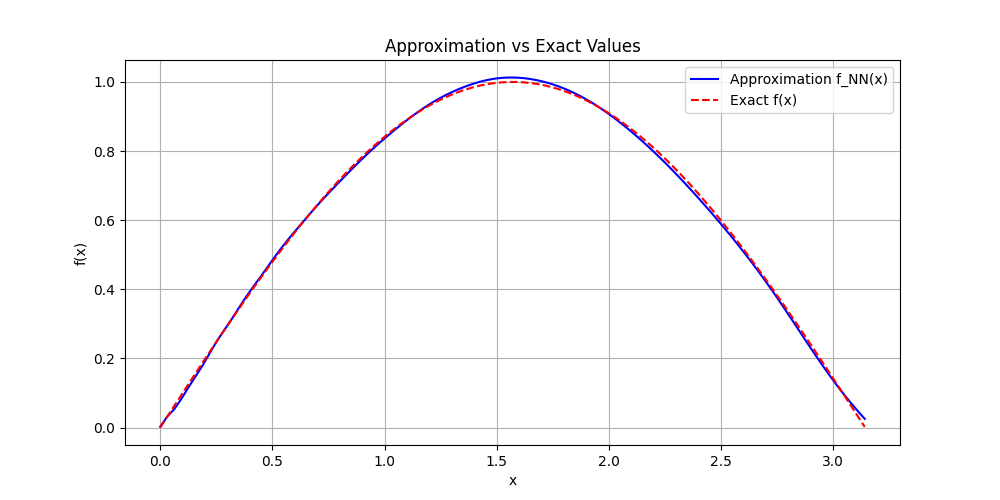}
    }
    \subfigure{
        \includegraphics[width=0.45\textwidth]{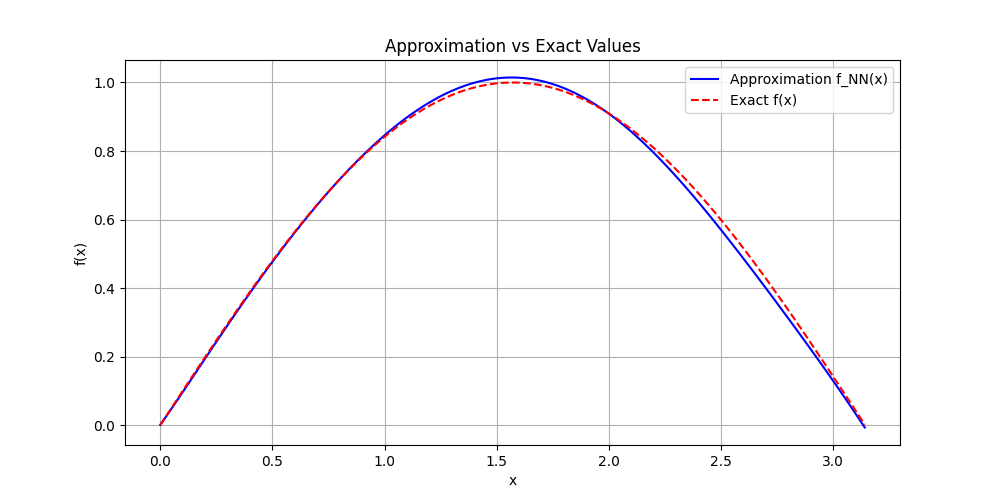}
    }
    \subfigure{
        \includegraphics[width=0.45\textwidth]{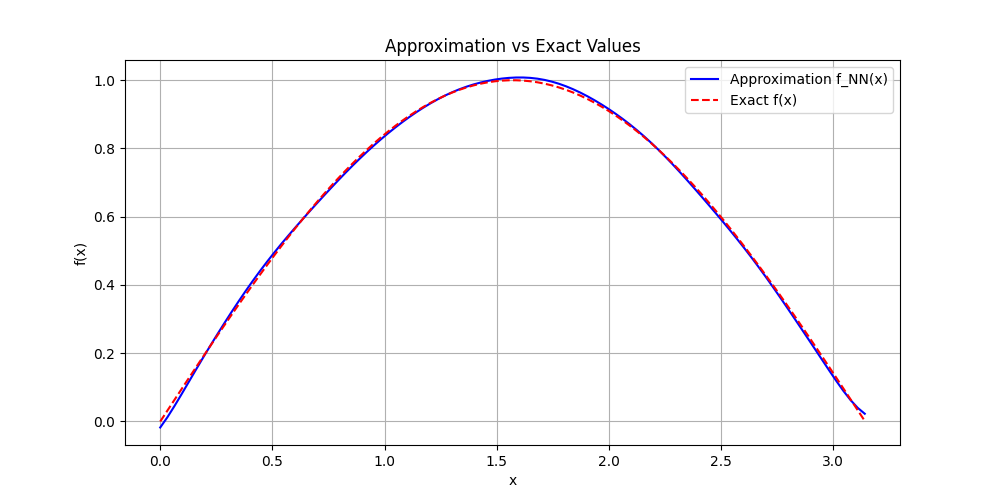}
    }
    \subfigure{
        \includegraphics[width=0.45\textwidth]{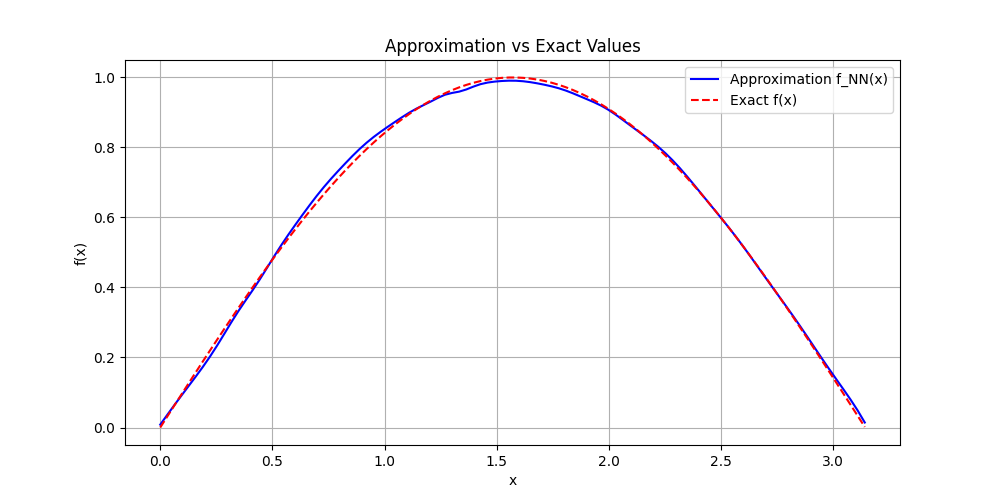}
    }
    \caption{The exact solution (red) and numerical approximation (blue) of the source term $f$ for different observation regions: top-left: $\Omega_{0}^{1}:=[0,0.5\pi]$; top-right: $\Omega_{0}^{2}:=[0,0.1\pi]$; bottom-left: $\Omega_{0}^{3}:=[0.3\pi,0.5\pi]$; bottom-right: $\Omega_{0}^{4}:=[0,0.1\pi]\cup[0.9\pi,\pi]$.}
    \label{Fig.1D.Local.f}
\end{figure}

\begin{table}[!t]
    \centering
    \begin{tabular}{c|c}
        \hline
        \textbf{Observation Region} & $Re f$ \\
        \hline
        $\Omega_{0}^{1}:=[0,0.5\pi]$     & 1.15346\% \\
        $\Omega_{0}^{2}:=[0,0.1\pi]$ & 2.00585\% \\
        $\Omega_{0}^{3}:=[0.3\pi,0.5\pi]$  & 1.07779\% \\
        $\Omega_{0}^{4}:=[0,0.1\pi]\cup[0.9\pi,\pi]$   & 1.39351\% \\
        \hline
    \end{tabular}
    \caption{ The relative errors of the source term $f$ under different observation regions $\Omega_{0}^{k},k=1,2,3,4.$}
    \label{Tab.1D.Local.f}
\end{table}

\subsection{Two-dimensional example}
In this subsection, setting $T=1,a=1,b=1$, we will consider the two dimensional complex Ginzburg-Landau equation
\begin{equation*}
	\begin{cases}
  u_{t}-(a+ib)\Delta u= e^{-t} f(x,y), & (x,y,t)\in\Omega_T, \\

 u(x,y,t)=0, & (x,y,t)\in\partial\Omega\times(0,T], \\

 u(x,y,0)=0, & (x,y)\in\Omega ,
    \end{cases}
\end{equation*}
where $\Omega=[0,\pi]\times[0,\pi]$, $f(x,y)=\sin x \sin y$, and the exact solution $u$ of the above equation is
\[
u(x,y,t)=\frac{\sin x \sin y}{2(a+ib)-\gamma } \left[e^{-t}-e^{-2(a+ib) t} \right].
\]
Let $h^{\delta}$ denote the observed data with noise $\delta>0$, defined as:
$$
h^{\delta}:=h(x,y)(1+\delta \cdot randn(0,1)),
$$
where $randn(0,1)$ represents a standard normal distribution.

\begin{example}{Global observation and experimental results.}

In the whole domain $\Omega=[0,\pi]\times[0,\pi]$, we take $u(x,y,1)$ as the observation data, and its precise expression is
$$
h^{\delta}(x,y)=u^{\delta}(x,y,1)=\frac{\sin x \sin y}{2(1+i)-1} \left[e^{-1}-e^{-2(1+i)} \right](1+\delta \cdot randn(0,1)).
$$
We consider three different noise levels: $\delta_1 = 0$, $\delta_2 = 0.01$, and $\delta_3 = 0.1$, aiming to study the impact of different noises on the experimental results under two-dimensional global observation.
    
\end{example}

We take $N=N_{\mathrm{int}}+N_{\mathrm{sb}}+N_{\mathrm{tb}}+N_d=256+256\times4+256+256\times1=1792$ as the number of collocation points. The experimental results are given in Figures \ref{Fig.2D.Global.f.0},  \ref{Fig.2D.Global.f.0.01}, \ref{Fig.2D.Global.f.0.1} and Table \ref{Tab.2D.Global.f}. 
As shown in Table \ref{Tab.2D.Global.f}), analysis of the relative error data for the inverted source term $f$ reveals that elevated observation noise levels progressively elevate the relative error in $f$. These errors remain within acceptable bounds, with each noise-level increment inducing only marginal changes in error magnitude, which indicates that our method can effectively reconstruct the two-dimensional source function $f$ through a single instantaneous observation in the whole observation domain.

\begin{figure}[htbp]
	\centering
	\includegraphics[width=1.0\linewidth]{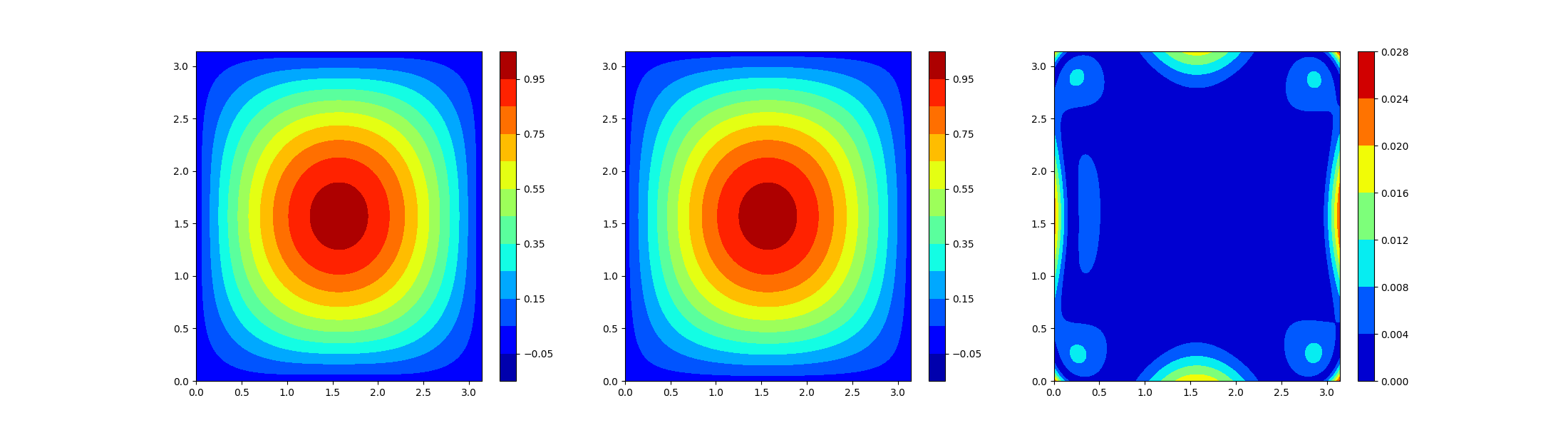}
	\caption{The numerical approximation (left), exact solution (middle), and relative error (right) of the source term $f$ under observation noise level $\delta_1=0$. }
	\label{Fig.2D.Global.f.0}
\end{figure}

\begin{figure}[htbp]
	\centering
	\includegraphics[width=1.0\linewidth]{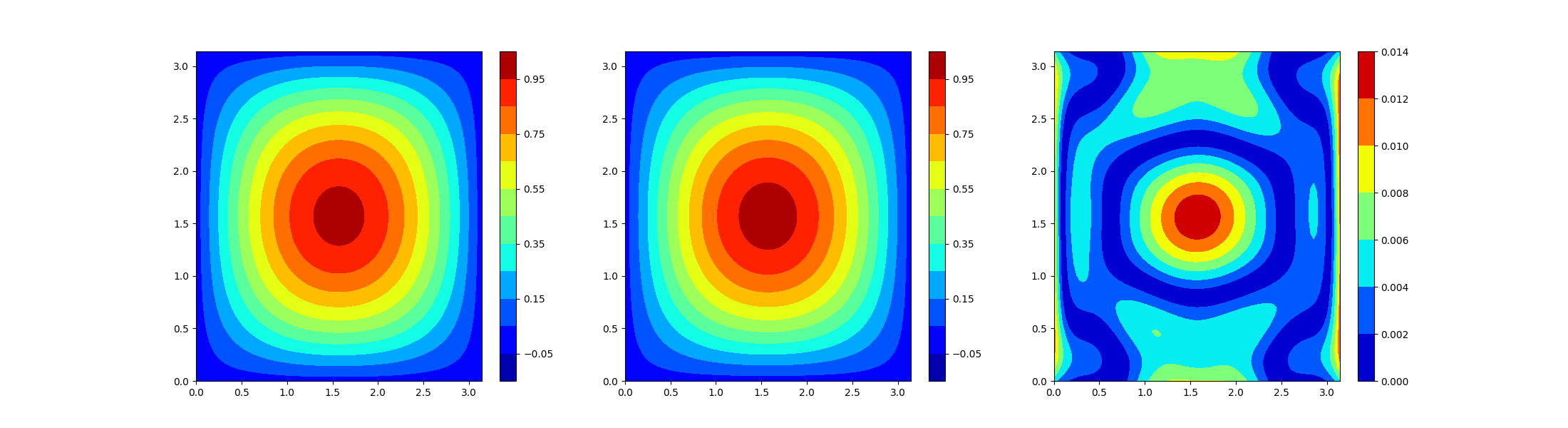}
	\caption{The numerical approximation (left), exact solution (middle), and relative error (right) of the source term $f$ under observation noise level $\delta_1=0.01$. }
	\label{Fig.2D.Global.f.0.01}
\end{figure}

\begin{figure}[htbp]
	\centering
	\includegraphics[width=1.0\linewidth]{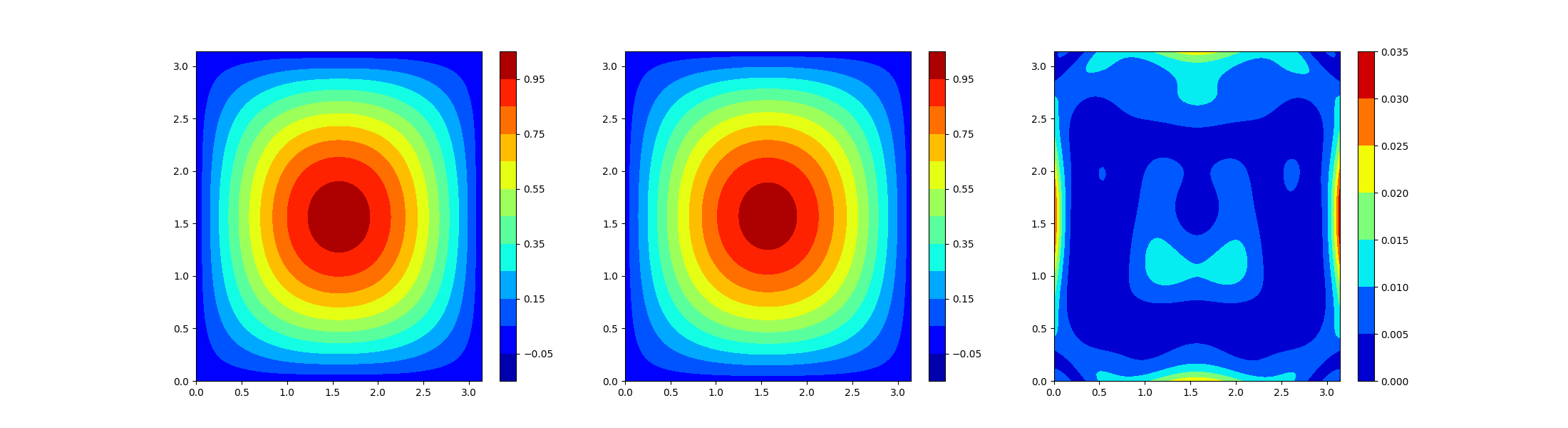}
	\caption{The numerical approximation (left), exact solution (middle), and relative error (right) of the source term $f$ under observation noise level $\delta_1=0.1$. }
	\label{Fig.2D.Global.f.0.1}
\end{figure}

\begin{table}[!t]
    \centering
    \begin{tabular}{c|c}
        \hline
        \textbf{Noise Level} & $Re f$ \\
        \hline
        $\delta_1=0$     & 0.92479\% \\
        $\delta_2=0.01$  & 1.02097\% \\
        $\delta_3=0.1$   & 1.52068\% \\
        \hline
    \end{tabular}
    \caption{ The relative errors of the source term $f$ under different observation noise levels $\delta_k,k=1,2,3$.}
    \label{Tab.2D.Global.f}
\end{table}

\begin{example}{Local observation and experimental results.}

In this example, we consider studying the inverse source term by taking subdomain observations at two instants. In the subdomain $\Omega_0\subset\Omega=[0,\pi]$, we take $u(x,y,1),u(x,y,0.5)$, $(x,y)\in\Omega_0$ as observation data, and their expressions with noise are
$$
h^{\delta}_1(x,y)=u^{\delta}(x,y,1)=\frac{\sin x \sin y}{2(1+i)- 1} \left[e^{-1}-e^{-2(1+i)}  \right](1+\delta \cdot randn(0,1)) ,
$$
and 
$$
h^{\delta}_2(x,y)=u^{\delta}(x,y,0.5)=\frac{\sin x\sin y}{2(1+i)- 1} \left[e^{-0.5}-e^{1+i} \right](1+\delta \cdot randn(0,1)) ,
$$
 where the noise level $\delta=0.001$. Consider the following local domains: $\Omega_{0}^{1}=[0,0.5\pi]\times[0,0.5\pi]$, $\Omega_{0}^{2}=[0,0.2\pi]\times[0,0.2\pi]$, $\Omega_{0}^{3}=[0.3\pi,0.5\pi]\times[0.3\pi,0.5\pi]$, and $\Omega_{0}^{4}=[0,0.1\pi]\times[0,0.1\pi]\cup [0.9\pi,\pi]\times[0.9\pi,\pi]$.
    
\end{example}

For the implementation details of the two-dimensional local data measurement, everything is consistent with the global data measurement except for the number of collocation points. In subdomains, we take $N=N_{\mathrm{int}}+N_{\mathrm{sb}}+N_{\mathrm{tb}}+N_d=256+256\times4+256+256\times2=2048$ collocation points.

The results after training are shown in Figures \ref{Fig.2D.Local.f.0.5}, \ref{Fig.2D.Local.f.0.2}, \ref{Fig.2D.Local.f.0.3-0.5}, \ref{Fig.2D.Local.f.0.1}, and Table \ref{Tab.2D.Local.f}. Upon analyzing the training results, it is evident that when the local observation domain is reduced, the relative error for the inverted source term $f$ increases. Moreover, with an identical size of the local observation domain, if it is closer to the boundary and further from the central region, the relative error of the source term $f$ also increases. For instance, in Figure \ref{Fig.2D.Local.f.0.1}, the relative error of $f$ is primarily concentrated in the central part. In summary, although the values of the relative error of the source term $f$ vary when we choose observation regions of different sizes and shapes, they are generally within an acceptable range. These results indicate that even if the observation domain is a subdomain, our method can effectively reconstruct the source function $f$ by observing two instants.

\begin{figure}[htbp]
	\centering
	\includegraphics[width=1.0\linewidth]{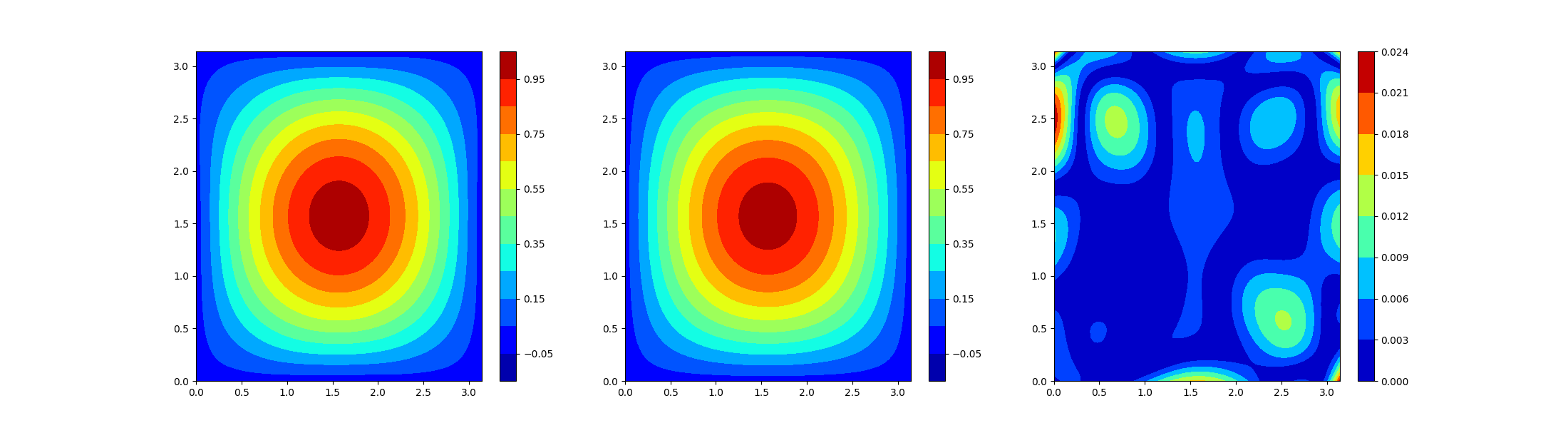}
	\caption{ The numerical approximation (left), exact solution (middle), and relative error (right) of the source term $f$ in the observation domain $\Omega_{0}^{1}$. 
 }
	\label{Fig.2D.Local.f.0.5}
\end{figure}

\begin{figure}[htbp]
	\centering
	\includegraphics[width=1.0\linewidth]{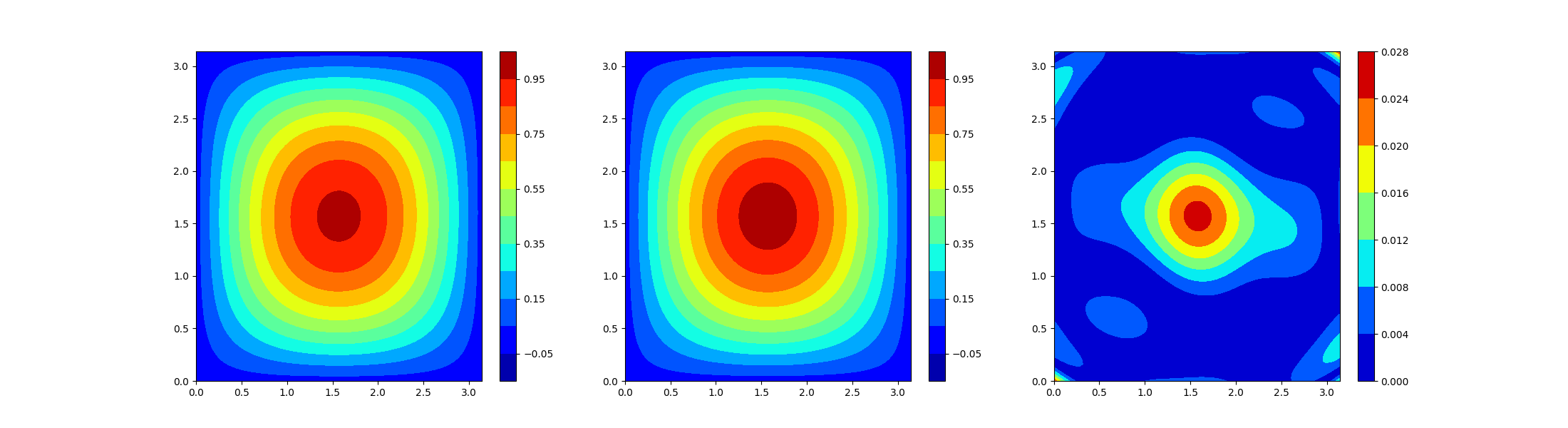}
	\caption{ The numerical approximation (left), exact solution (middle), and relative error (right) of the source term $f$ in the observation domain $\Omega_{0}^{2}$.}
	\label{Fig.2D.Local.f.0.2}
\end{figure}

\begin{figure}[htbp]
	\centering
	\includegraphics[width=1.0\linewidth]{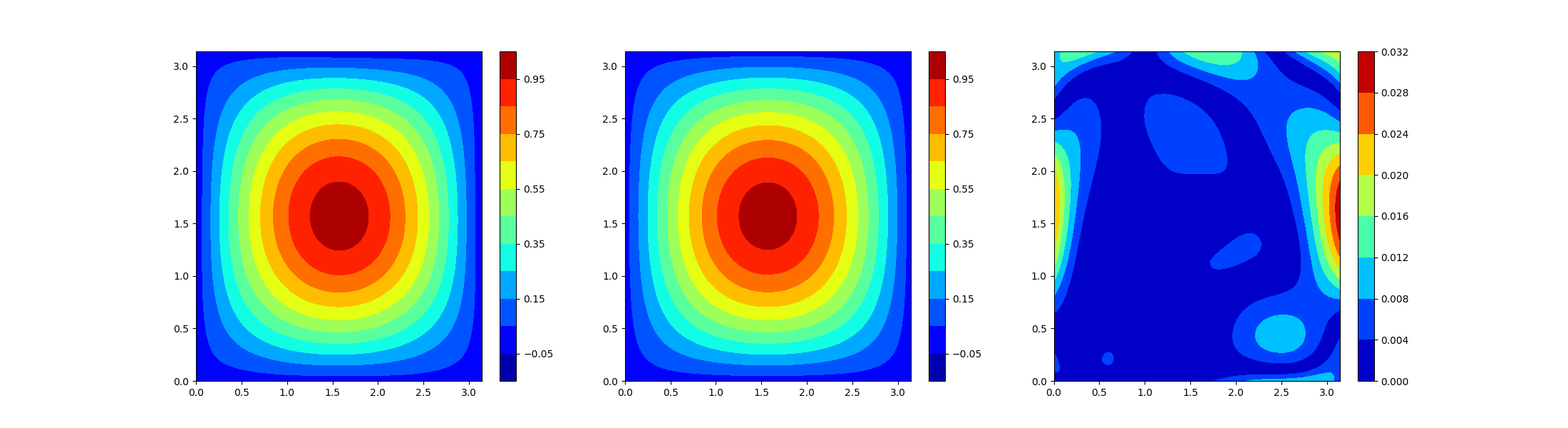}
	\caption{ The numerical approximation (left), exact solution (middle), and relative error (right) of the source term $f$ in the observation domain $\Omega_{0}^{3}$.}
	\label{Fig.2D.Local.f.0.3-0.5}
\end{figure}

\begin{figure}[htbp]
	\centering
	\includegraphics[width=1.0\linewidth]{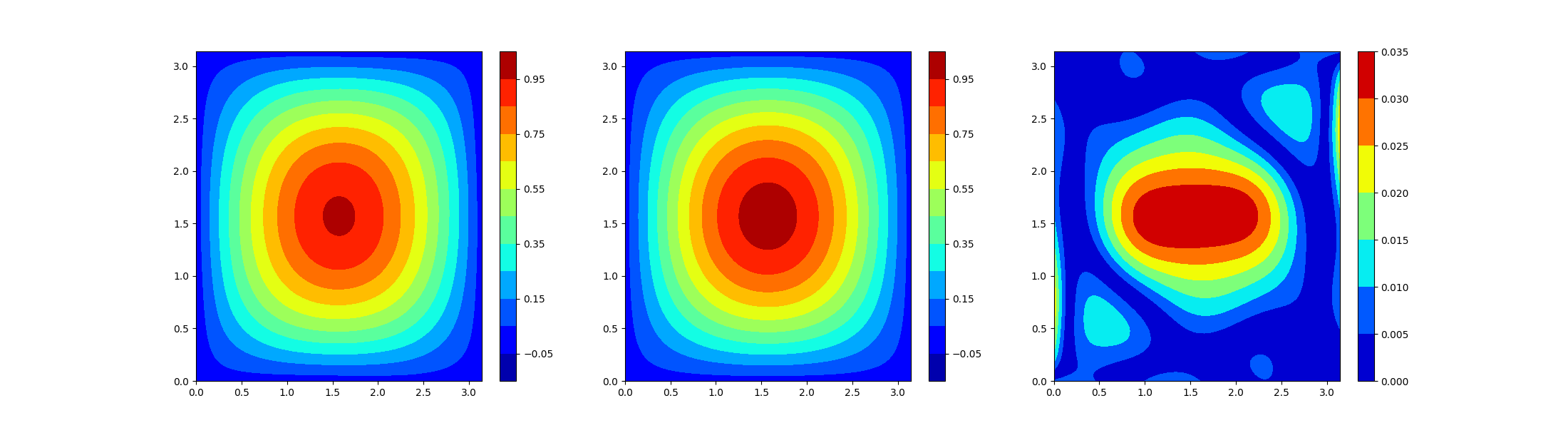}
	\caption{ The numerical approximation (left), exact solution (middle), and relative error (right) of the source term $f$ in the observation domain $\Omega_{0}^{4}$.}
	\label{Fig.2D.Local.f.0.1}
\end{figure}

\begin{table}[!t]
    \centering
    \begin{tabular}{c|c}
        \hline
        \textbf{Observation Region} & $Re f$ \\
        \hline
        $\Omega_{0}^{1}=[0,0.5\pi]\times[0,0.5\pi]$     & 1.02185\% \\
        $\Omega_{0}^{2}=[0,0.2\pi]\times[0,0.2\pi]$  & 1.38477\% \\
        $\Omega_{0}^{3}=[0.3\pi,0.5\pi]\times[0.3\pi,0.5\pi]$   & 1.35311\% \\
         $\Omega_{0}^{4}=[0,0.1\pi]\times[0,0.1\pi]\cup [0.9\pi,\pi]\times[0.9\pi,\pi]$   & 2.83682\% \\
        \hline
    \end{tabular}
    \caption{  The relative errors of the source term $f$ in different observation regions $\Omega_{0}^{k},k=1,2,3,4.$}
    \label{Tab.2D.Local.f}
\end{table}

\section{Concluding remarks}\label{sec-rem}
In this paper, under the assumption that \( g(t) = e^{-\gamma t} \) for a specified constant \(\gamma \in \mathbb C\), we investigate the inverse problem of recovering the source function \( f(x) \) in the initial-boundary value problem of \eqref{eq-gov}. We consider two approaches: one using the global data at the terminal time \( T \), and the other using measurements at two different instants \( 0 < T_1 < T_2 \). Specifically, the measurements at two different instants are given by
\[
u(x, T_j), \quad x \in \Omega_0 \subset \Omega, \quad j = 1, 2.
\]
For the first case, by employing the eigenfunction expansion method, we proved the uniqueness and conditional stability of the source identification under this observation setting.  
For the second case, the key to the proof is constructing a real analytic function from the combination of the measurements at these two instants. The local information of this analytic function can determine its global information, thereby providing the complete information of \( f(x) \). This approach allows us to prove the conditional stability of recovering \( f(x) \) by constructing the dual system and a variational norm stated in Lemma \ref{lem-dualnorm}.  

In terms of numerical implementation, we propose a novel loss function that incorporates derivative information of the PDE residuals as a regularization term. Based on the conditional stability of the inverse problem for the complex Ginzburg–Landau equation, we derive corresponding generalization error estimates. To validate the effectiveness of the proposed method, extensive numerical experiments are conducted in both one-dimensional and two-dimensional settings. The results demonstrate that the algorithm exhibits excellent performance in addressing the ill-posedness of the inverse problem, significantly enhancing the stability and accuracy of the solution.

Nevertheless, the method still faces certain challenges in deep neural network implementations, particularly when the loss function includes derivative terms of the residuals. In such cases, the need for long-time integration of the governing equations substantially increases the computational cost. To alleviate this issue, adaptive activation strategies \cite{jagtap2020,jagtap2022} can be employed to accelerate the convergence process. In future work, we aim to further investigate high-dimensional nonlinear inverse problems associated with the complex Ginzburg–Landau equation, with the goal of expanding the theoretical framework and exploring practical applicability, thereby advancing the development of data-driven inversion methodologies in this domain.

\section*{Declarations}
On behalf of all authors, the corresponding author states that there is no conflict of interest. No datasets were generated or analyzed during the current study.

\section*{Acknowledgments}
Z. Li thanks the National Natural Science Foundation of China (no. 12271277) and the Ningbo Youth Leading Talent Project (no. 2024QL045). M. Zhang is supported by National Natural Science Foundation of China (Grant No. 12301537), Natural Science Foundation of Hebei Province (Grant No. A2023202024) and Foundation of Tianjin Education Commission Research Program (Grant No. 2022KJ102). This work is partly supported by the Open Research Fund of the Key Laboratory of Nonlinear Analysis \& Applications (Central China Normal University), Ministry of Education, China and also the NSF of Jiangsu Province (Grant No. BK20221497 and No. BK20230036).

\end{document}